\theoremstyle{plain}
\newtheorem{thm}{Theorem}[section]
\newtheorem{cor}[thm]{Corollary}
\newtheorem{lem}[thm]{Lemma}
\newtheorem{prop}[thm]{Proposition}
\theoremstyle{definition}
\newtheorem{defn}[thm]{Definition}
\newtheorem{ex}[thm]{Example}
\theoremstyle{remark}
\newtheorem*{rem}{Remark}
\newcommand{\id}{\operatorname{id}}
\newcommand{\im}{\operatorname{im}}
\newcommand{\coker}{\operatorname{coker}}
\newcommand{\pr}{\operatorname{pr}}
\newcommand{\xra}[1]{\xrightarrow{#1}}
\newcommand{\ra}{\rightarrow}
\newcommand{\xlra}[1]{\xrightarrow{\ #1\ }}
\newcommand{\lra}{\longrightarrow}
\newcommand{\Ra}{\xRightarrow{\ \ }}
\newcommand{\cof}[1][]{\mathbin{\:\!\!\xymatrix@1@C=15pt{{}\ar@{ >->}[r]^{#1} & {}}}}
\newcommand{\fib}[1][]{\mathbin{\:\!\!\xymatrix@1@C=15pt{{}\ar@{->>}[r]^{#1} & {}}}}
\newcommand{\embed}[1][]{\mathbin{\:\!\!\xymatrix@1@C=15pt{{}\ar@{c->}[r]^{#1} & {}}}}
\newcommand{\sect}[1]{\POS[l]+R*!!<0pt,\the\fontdimen22\textfont2>{\vphantom{|}}="a";[]+L*!!<0pt,\the\fontdimen22\textfont2>{\vphantom{|}} \ar@<-5pt>@/_2pt/"a"_-{#1}}
\DeclareMathAlphabet{\mathsfsl}{OT1}{cmss}{m}{sl}
\newcommand{\bbZ}{\mathbb Z}
\newcommand{\mcG}{\mathcal{G}}
\newcommand{\mcH}{\mathcal{H}}
\newcommand{\sSet}{\mathsf{sSet}}
\newcommand{\sfP}{\mathsfsl{P}}
\newcommand{\Gp}[2]{\mathbb{G}_{#1,#2}}
\newcommand{\Gen}[2]{\mathsfsl{Gen}_{#1,#2}}
\newcommand{\Gpfe}[2]{G_{#1,#2}^\mathrm{fe}}
\newcommand{\Gpse}[2]{G_{#1,#2}^\mathrm{se}}
\newcommand{\Map}{\mathsfsl{Map}}
\newcommand{\MPS}{\mathsfsl{MPS}}
\newcommand{\PMPS}{\mathsfsl{PMPS}}
\newcommand{\PairPMPS}{\mathsfsl{PairPMPS}}
\newcommand{\comma}{{},\,}
\newcommand{\pbsize}{15pt}
\newcommand{\pboffset}{.5}
\newcommand{\xycorner}[3]{\save #2="a";#1;"a"**{}?(\pboffset);"a"**\dir{-};#3;"a"**{}?(\pboffset);"a"**\dir{-}\restore}
\newcommand{\pb}{\xycorner{[]+<\pbsize,0pt>}{[]+<\pbsize,-\pbsize>}{[]+<0pt,-\pbsize>}}
\newcommand{\xymatrixc}[1]{\xy *!C\xybox{\xymatrix{#1}}\endxy}
\newlength{\hlp}
\newcommand{\orightbox}[2]{\settowidth{\hlp}{$#2$}\makebox[\hlp][r]{${#1}{#2}$}}
\newcommand{\leftbox}[2]{{}\phantom{#1} \save []+L*+<.5pc>!!<0pt,\the\fontdimen22\textfont2>!L{#1#2} \restore}
\newcommand{\rightbox}[2]{{}\phantom{#2} \save []+R*+<.5pc>!!<0pt,\the\fontdimen22\textfont2>!R{#1#2} \restore}
\newcommand{\bdry}{d}
\newcommand{\vertex}[1]{#1}
\newcommand{\them}{m}
\newcommand{\then}{n}
\newcommand{\theq}{q}
\newcommand{\thedim}{{n}}
\newcommand{\theotherdim}{{m}}
\newcommand{\thedimm}{{i}}
\newcommand{\stdsimp}[1]{\Delta^{#1}}
\newcommand{\horn}[2]{%
\mbox{$\xy
<0pt,-\the\fontdimen22\textfont2>;p+<.1em,0em>:
{\ar@{-}(0,0.1);(3,7)},
{\ar@{-}(3,7);(6,0.1)},
{\ar@{-}(3.2,7);(6.2,0.1)},
{\ar@{-}(3.4,7);(6.4,0.1)}
\endxy\;\!{}^{#1}_{#2}$}}
\newcommand{\Pnew}{{P_\thedim}}
\newcommand{\Pold}{{P_{\thedim-1}}}
\newcommand{\pin}{{\pi_\thedim}}
\newcommand{\oold}{{o}}
\newcommand{\Kn}{{K_{\thedim+1}}}
\newcommand{\Ln}{{L_\thedim}}
\newcommand{\kn}{{k_\thedim}}
\newcommand{\knp}{{k_\thedim'}}
\newcommand{\kip}{{k_\thedimm'}}
\newcommand{\knst}{{k_{\thedim*}}}
\newcommand{\pn}{{p_\thedim}}
\newcommand{\pnst}{{p_{\thedim*}}}
\newcommand{\jnst}{{j_*}}
\newcommand{\fn}{{f_\thedim}}
\newcommand{\varphin}{{\varphi_\thedim}}
\newcommand{\varphinst}{{\varphi_{\thedim*}}}
\newcommand{\alphan}{{\alpha_\thedim}}
\newcommand{\psin}{{\psi_\thedim}}
\newcommand{\clnull}[1]{\ref{cl:null}${}_{#1}$}
\newcommand{\clgen}[1]{\ref{cl:gen}${}_{#1}$}
\newcommand{\clpoly}[1]{\ref{cl:poly}${}_{#1}$}
\titleformat{\section}[block]
{\normalfont\Large\filcenter\bfseries}{\thesection.}{.33em}{}
\titleformat{\subsection}[runin]
{\normalfont\normalsize\bfseries}{\thesubsection.}{.33em}{}[.]
\begin{document}

\author{M.\ Filakovsk\'{y}, L.\ Vok\v{r}\'{i}nek}
\title{Are two given maps homotopic?\\ An algorithmic viewpoint\footnote{%
The research of M.~F.\ was supported by Masaryk University project MUNI/A/0838/2012. The research of L.~V.\ was supported by the Center of Excellence -- Eduard \v{C}ech Institute (project P201/12/G028 of GA~\v{C}R).\newline
2010 \emph{Mathematics Subject Classification}. Primary 55Q05; Secondary 55P40.\newline
\emph{Key words and phrases}. homotopy, suspension, polycyclic group.
}}
\date{\today}
\maketitle

\begin{abstract}
This paper presents two algorithms. In their simplest form, the first algorithm decides the existence of a pointed homotopy between given simplicial maps $f\comma g\colon X\to Y$ and the second computes the group $[\Sigma X,Y]^*$ of pointed homotopy classes of maps from a suspension; in both cases, the target $Y$ is assumed simply connected and the algorithms run in polynomial time when the dimension of $X$ is fixed. More generally, these algorithms work relative to $A\subseteq X$, fibrewise over a simply connected $B$ and also equivariantly when all spaces are equipped with a \emph{free} action of a fixed finite group $G$.
\end{abstract}

\section{Introduction}

In this paper, we are interested in decision algorithms for the existence of a homotopy between given maps $f\comma g\colon X\to Y$. For computational purposes, we assume $X$ and $Y$ given as finite simplicial complexes or more generally as finite simplicial sets, $f$ and $g$ as simplicial maps but we ask for a \emph{continuous} homotopy between them. It is well known that no homotopy decision algorithm may exist if $Y$ is allowed to be non-simply connected; this follows at once from Novikov's result \cite{Novikov} on the unsolvability of the word problem in groups. For this paper, we will thus restrict our attention to the case of a simply connected $Y$. In this respect, the following result is optimal. It is stated in a more general context of pointed homotopy.

{\renewcommand{\thethm}{A}\addtocounter{thm}{-1}
\begin{thm}\label{thm:main_simple}
There is an algorithm that decides the existence of a pointed homotopy between given simplicial maps $f\comma g\colon X\to Y$, where $X$, $Y$ are finite simplicial sets and $Y$ is assumed to be simply connected. When the dimension of $X$ is fixed, this algorithm runs in polynomial time.
\end{thm}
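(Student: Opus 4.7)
The plan is to reduce the pointed homotopy decision problem to an extension problem, and then to solve that extension problem by obstruction theory along an \emph{effective} Postnikov tower of $Y$.

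\emph{Reformulation.} A pointed simplicial homotopy from $f$ to $g$ is a map $H\colon X \times \Delta^1 \to Y$ that is constant on $\{x_0\} \times \Delta^1$ and restricts to $f$ and $g$ on $X \times \{0\}$ and $X \times \{1\}$. Writing $A := X \times \partial\Delta^1 \cup \{x_0\} \times \Delta^1$, this is the question of whether the prescribed map on $A$ extends over $X \times \Delta^1$. Since the quotient $(X\times \Delta^1)/A$ is the reduced suspension $\Sigma X$, the relative cohomology groups $H^{k+1}(X\times\Delta^1, A; \pi)$ identify with $\widetilde H^k(X;\pi)$; this is where obstructions and indeterminacies will live.

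\emph{Effective Postnikov tower.} Replace $Y$ by a Postnikov tower $\cdots \to P_n \to P_{n-1} \to \cdots \to P_1 = *$, truncated at $n = \dim X + 1$. Since $Y$ is simply connected and finite, each homotopy group $\pi_n Y$ is a finitely generated abelian group, and each stage $p_n\colon P_n \to P_{n-1}$ is a principal $K(\pi_n Y, n)$-fibration with a $k$-invariant $k_n \in H^{n+1}(P_{n-1}; \pi_n Y)$. Using the framework of locally effective simplicial sets equipped with effective homology (polynomially sized models of $K(\pi,n)$'s, principal twisted products, and the induced chain complexes), one constructs the stages $P_n$ together with their $k$-invariants and computable cohomology with coefficients in the $\pi_i Y$'s. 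The lifts $f_n, g_n\colon X \to P_n$ are built inductively through the tower.

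\emph{Stage-by-stage climb.} Inductively maintain the set $\mathcal{H}_{n-1}$ of pointed homotopies $f_{n-1} \simeq g_{n-1}$, encoded as a (possibly empty) coset of a subgroup of $\widetilde H^{n-1}(X; \pi_{n-1}Y)$. Given $H_{n-1} \in \mathcal{H}_{n-1}$, the obstruction to lifting it to a homotopy $f_n \simeq g_n$ at stage $n$ is a class in $\widetilde H^n(X; \pi_n Y)$, computable from $H_{n-1}$ and $k_n$. As $H_{n-1}$ ranges over its coset the obstructions sweep out an affine subset of $\widetilde H^n(X; \pi_n Y)$: decide by linear algebra over finitely generated abelian groups whether $0$ lies in it; if so update $\mathcal{H}_n$ and continue, otherwise output ``no''. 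Because $Y \to P_n$ is an $n$-equivalence and $X\times\Delta^1$ has dimension $\dim X + 1$, the answer at the top of the tower equals the answer to the original homotopy problem.

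\emph{Main obstacle and complexity.} The principal difficulty is not the obstruction-theoretic scheme but rather making the Postnikov tower --- and especially its $k$-invariants --- effective while keeping all data of polynomial size when $\dim X$ is fixed; this requires carefully polynomial versions of the Eilenberg--Zilber reductions and the bar construction to model $K(\pi,n)$'s and principal twisted products with computable face and degeneracy operators. Granted this, each stage involves only polynomial-time arithmetic and linear algebra over finitely generated abelian groups, and since the height of the tower is bounded by the constant $\dim X + 1$, the overall algorithm is polynomial.
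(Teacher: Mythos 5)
Your overall scheme---climb an effective Postnikov tower of $Y$ and decide at each stage whether some homotopy $f_{n-1}\simeq g_{n-1}$ lifts---has the right shape and matches the paper's strategy in outline, but the step where you ``decide by linear algebra over finitely generated abelian groups whether $0$ lies in an affine subset of $\widetilde H^n(X;\pi_nY)$'' hides the actual difficulty, and as stated it fails outside the stable range. First, the set of track classes of pointed homotopies $f_{n-1}\simeq g_{n-1}$ is not a coset of a subgroup of the single group $\widetilde H^{n-1}(X;\pi_{n-1}Y)$: if nonempty it is a torsor over the whole group of self-homotopies, which the paper identifies (after pulling the Moore--Postnikov stage back along $X\to B$ and using $g_n$ as a zero section) with $[\Sigma X,P_{n-1}]^*$, a group assembled from \emph{all} of $\widetilde H^{n-1-i}(X;\pi_{n-1-i}Y)$ by iterated extensions. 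Consequently the indeterminacy of the stage-$n$ obstruction is the image of a homomorphism $\partial\colon[\Sigma X,P_{n-1}]^*\to\widetilde H^n(X;\pi_nY)$ as in the exact sequence \eqref{e:les}, and to decide membership in $\im\partial$ one needs a finite generating set of its domain. When $\dim X$ exceeds twice the connectivity of $Y$, that domain is genuinely non-abelian; it is not a priori clear that it is finitely generated, and no linear algebra over abelian groups produces its generators. This is exactly the connectivity restriction in the earlier work of \v{C}adek et al.\ that the present theorem removes, and the paper's essential new contribution is to show these groups are polycyclic and to develop \emph{fully effective polycyclic groups} (closed under kernels and extensions, Propositions~\ref{p:poly_ker} and~\ref{p:poly_extension}), so that generators---and hence $\im\partial$---are computed by a second induction running in parallel with the nullhomotopy decision. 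Without a substitute for this, your stage-by-stage climb cannot proceed past the first unstable stage.

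A secondary point: the paper does not compare $f$ and $g$ directly via an extension problem on the pair $\bigl(X\times\Delta^1,\ X\times\partial\Delta^1\cup\{x_0\}\times\Delta^1\bigr)$. It first replaces $Y$ by $P_n$ with $n=\dim X$, then forms the pullback $X\times_BP_n\to X$ equipped with the section $(\id,g_n)$, thereby converting ``$f_n\simeq g_n$'' into ``$(\id,f_n)$ is nullhomotopic'' in a pointed \emph{fibrewise} setting. This reduction is what makes the zero section, the group structure on homotopies, and the exact sequence \eqref{e:les} available for arbitrary $g$; the authors remark that the only non-fibrewise argument they know requires $g$ to be constant. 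Your write-up should either adopt this reduction or explain how the pointed machinery applies to a general $g$.
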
}

In the paper \cite{cmk}, the authors gave an algorithmic solution to the following problem: given two simplicial sets $X$, $Y$, compute $[X,Y]$, i.e.\ the set of homotopy classes of \emph{continuous} maps from $X$ to $Y$. Their algorithm works under a certain connectivity restriction on $Y$. This restriction can be removed when the domain is replaced by a suspension -- this is our next result which, at the same time, generalizes the computation of homotopy groups of spaces described by Brown in \cite{Brown}.

{\renewcommand{\thethm}{B}\addtocounter{thm}{-1}
\begin{thm}\label{thm:suspension_simple}
There is an algorithm that computes the group $[\Sigma X,Y]^*$ of pointed homotopy classes of maps from a suspension $\Sigma X$ to a simply connected simplicial set $Y$. The running time of this algorithm is polynomial when the dimension of $X$ is fixed.
\end{thm}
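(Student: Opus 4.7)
The plan is to compute $[\Sigma X, Y]^*$ by building a Postnikov tower of $Y$ and proceeding inductively up the tower. Since $Y$ is simply connected, its Postnikov tower is algorithmically computable by the techniques of \cite{Brown} and \cite{cmk}: one obtains principal fibrations $K(\pi_k Y, k) \to P_k Y \to P_{k-1} Y$ together with explicit $k$-invariants $\kappa_k \in H^{k+1}(P_{k-1} Y; \pi_k Y)$. Because $\Sigma X$ has dimension at most $n+1$ with $n = \dim X$, one has $[\Sigma X, Y]^* = [\Sigma X, P_{n+1} Y]^*$, so it is enough to compute $[\Sigma X, P_k Y]^*$ inductively in $k$, starting from the trivial group $[\Sigma X, P_1 Y]^* = 0$.

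The suspension $\Sigma X$ is a co-H-space via the pinch map $\Sigma X \to \Sigma X \vee \Sigma X$, and this endows $[\Sigma X, P_k Y]^*$ with a natural group structure. The fiber sequence above then gives rise to an exact sequence of groups
\begin{equation*}
H^k(\Sigma X; \pi_k Y) \xra{\alpha} [\Sigma X, P_k Y]^* \xra{\beta} [\Sigma X, P_{k-1} Y]^* \xra{\gamma} H^{k+1}(\Sigma X; \pi_k Y),
\end{equation*}
where $\gamma([f]) = f^*(\kappa_k)$. The outer groups, which by the suspension isomorphism equal $\widetilde H^{k-1}(X; \pi_k Y)$ and $\widetilde H^{k}(X; \pi_k Y)$, are finitely generated abelian and effectively computable from $X$ and the group $\pi_k Y$ read off from the Postnikov tower. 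Since $\gamma$ is computable on representatives, $\ker \gamma$ arises as a computable subgroup of the inductively known polycyclic group $[\Sigma X, P_{k-1} Y]^*$, and $[\Sigma X, P_k Y]^*$ is presented as an extension of $\ker \gamma$ by the image of $\alpha$, itself a quotient of the finitely generated abelian group $H^k(\Sigma X; \pi_k Y)$.

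The delicate step is to determine the $2$-cocycle of this extension effectively. For this one builds explicit simplicial representatives for a set of generators: lifts of generators of $\ker \gamma$ to $P_k Y$, together with the images of generators of $H^k(\Sigma X; \pi_k Y)$ under $\alpha$. Products are computed using the co-H-multiplication of $\Sigma X$, and Theorem~\ref{thm:main_simple} is invoked (in its relative form) to decide when two such representative maps agree in $[\Sigma X, P_k Y]^*$. This yields a polycyclic presentation at each inductive step, and iterating produces the desired description of $[\Sigma X, Y]^*$. The main obstacle will be keeping the presentations small enough to survive the induction in polynomial time; this depends on the good complexity behaviour of polycyclic-group algorithms and on the polynomial running time of Theorem~\ref{thm:main_simple} for fixed $\dim X$, which must be re-examined when applied to the auxiliary maps built in the course of the induction.
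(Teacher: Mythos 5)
Your overall strategy---inducting up the Postnikov tower of $Y$, using the exact sequence of the principal fibration $K(\pi_k,k)\to P_k\to P_{k-1}$, and assembling $[\Sigma X,P_k Y]^*$ as a polycyclic extension of $\ker\gamma$ by $\im\alpha$---is essentially the route the paper takes (in the special case $A=B=*$; the paper works fibrewise and replaces the co-H-multiplication by concatenation of homotopies $I\times X\to P_k$, which is the same group structure). However, there is a genuine gap in how you handle the subgroup $\im\alpha$. Your exact sequence is truncated on the left: it should continue as
\[
[\Sigma(\Sigma X),P_{k-1}Y]^*\xra{\ \partial\ }H^k(\Sigma X;\pi_kY)\xra{\ \alpha\ }[\Sigma X,P_kY]^*\xra{\ \beta\ }\cdots,
\]
and $\im\alpha\cong\coker\partial$. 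To present $\im\alpha$ as a fully effective abelian group (which you need both to make the extension step work and to express error terms of your ``2-cocycle'' in terms of generators), you must compute $\im\partial$, and for that you need a set of generators of the \emph{double} suspension group $[\Sigma^2X,P_{k-1}Y]^*$. Your induction runs only over $k$ with $\Sigma X$ fixed, so this data is never available. The paper resolves this by a simultaneous induction over the groups $[I^q\times X,P_n]^{(\partial I^q\times X)\cup(I^q\times A)}_B$ for all $q\ge 1$ at once, i.e.\ over all iterated suspensions; the exact sequence at level $(n,q)$ consumes generators at level $(n-1,q+1)$.

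Your proposed workaround---lift generators, multiply them, and invoke Theorem~\ref{thm:main_simple} to decide when two representatives agree---does not close this gap. An equality oracle lets you \emph{verify} any proposed relation among the images $\alpha(e_1),\dots,\alpha(e_r)$ of the generators of $H^k(\Sigma X;\pi_kY)$, but it cannot certify that you have found \emph{all} relations: at any finite stage of testing, the relation subgroup of $\bbZ^r$ could still be anything from trivial to a lattice generated by vectors larger than those examined, so the orders $q_i$ of the cyclic factors (in particular, finite versus infinite) are not determined. Computing a quotient of a finitely generated abelian group requires generators of the subgroup being killed, which is exactly what $\im\partial$ supplies (this is the paper's Lemma~\ref{l:ker_coker} on cokernels). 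There is also a smaller circularity to watch: Theorem~\ref{thm:main_simple} at stage $k$ is itself proved using generators of the suspension groups at stages below $k$, so the nullhomotopy/equality decision and the polycyclic computation must be interleaved in a single induction, as in the paper's claims \clnull{n} and \clpoly{n}, rather than one being invoked as a black box inside the other.
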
}

The group is presented on the output as a so-called \emph{fully-effective polycyclic group} -- this structure is introduced in Section~\ref{s:polycyclic} and allows one e.g.\ to compute a finite set of generators and relations and solve the word problem.

\subsection*{Fibrewise version}

In order to prove Theorem~\ref{thm:main_simple} in its full generality, we work fibrewise over $X$; the only non-fibrewise proof that we know of requires $g$ to be constant. At the same time, our proof uses heavily \cite{aslep} and thus, the above results can be easily extended to the case of spaces under $A$ and over $B$.

We denote by $A/\sSet/B$ the category of simplicial sets under $A$ and over $B$, i.e.\ simplicial sets $X$ equipped with a pair of maps $A\to X\to B$ whose composition is a fixed map $A\to B$, surpressed from the notation. Morphisms in this category are maps $f\colon X\to Y$ for which both triangles in
\begin{equation}\label{e:A/sSet/B}
\xymatrixc{
A \ar[r]^-\alpha \ar[d]_-\iota & Y \ar[d]^-\varphi \\
X \ar[r]_-\beta \ar[ru] \POS?<>(.5)+/:a(90).5pc/*{\scriptstyle f} & B
}\end{equation}
commute. There is also an obvious notion of homotopy (relative to $A$ and fibrewise over $B$). In case $\iota$ is an inclusion and $\varphi$ is a Kan fibration, the resulting set of homotopy classes will be denoted by $[X,Y]^A_B$. For general $X\comma Y\in A/\sSet/B$, we define $[X,Y]^A_B$ by first replacing $\iota$ up to weak homotopy equivalence by an inclusion $A\cof X^\mathrm{cof}$ and $\varphi$ by a Kan fibration $Y^\mathrm{fib}\fib B$ and then setting $[X,Y]^A_B=[X^\mathrm{cof},Y^\mathrm{fib}]^A_B$.

For the fibrewise version of Theorem~\ref{thm:suspension_simple}, we need to generalize the notions of pointed spaces and suspensions. We say that a space $\varphi\colon Y\to B$ over $B$ is \emph{pointed} if there is provided a section $o\colon B\to Y$ of $\varphi$. For any space $\beta\colon X\to B$ over $B$, the composition $X\xra\beta B\xra o Y$ will be also denoted by $o$ and called the \emph{zero map}. If $\alpha=o$ in \eqref{e:A/sSet/B} and $\iota$ is injective, $\varphi$ a Kan fibration, then $[X,Y]^A_B$ is the set of homotopy classes of maps $f\colon X\to Y$ over $B$ that are zero on $A$.

The \emph{fibrewise suspension} $\Sigma_BX$ is obtained from the cylinder $I\times X$ by separately squashing each of $0\times X$ and $1\times X$ to $B$ using the given projection $\beta\colon X\to B$; it is naturally a space over $B$. The map $\iota\colon A\to X$ induces a map $\Sigma_BA\to\Sigma_BX$.

{\renewcommand{\thethm}{C}\addtocounter{thm}{-1}
\begin{thm}\label{thm:main}
Let a commutative square
\[\xymatrix{
A \ar[r]^-\alpha \ar[d]_-\iota & Y \ar[d]^-\varphi \\
X \ar[r]_-\beta & B
}\]
be given on the input, where all spaces are finite simplicial sets, both $Y$ and $B$ simply connected. Then the following algorithms exist:
\begin{enumerate}[topsep=2pt,itemsep=2pt,parsep=2pt,labelindent=.5em,leftmargin=*,label=\bfseries\upshape\thethm.\arabic*.]
\renewcommand{\theenumi}{\thethm.\arabic{enumi}}
\renewcommand{\labelenumi}{\bfseries\upshape\thethm.\arabic{enumi}.}
\item\label{thm:main_pta}
Given two maps $f \comma g\colon X\to Y$ in $A/\sSet/B$, decide whether they represent the same element in $[X,Y]^A_B$.
\renewcommand{\theenumi}{\thethm.\arabic{enumi}}
\renewcommand{\labelenumi}{\bfseries\upshape\thethm.\arabic{enumi}.}
\item\label{thm:main_ptb}
Given a zero section $o\colon B\to Y$, compute the group $[\Sigma_BX,Y]^{\Sigma_BA}_B$ of maps $\Sigma_BX\to Y$ over $B$ that are \emph{zero} on $\Sigma_BA$.
\end{enumerate}
When the dimensions of $A$ and $X$ are fixed, these algorithms run in polynomial time.
\end{thm}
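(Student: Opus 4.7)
The plan is to reduce both claims to iterative obstruction-theoretic computations along an effective fibrewise Postnikov tower of $\varphi\colon Y\fib B$, built via the polynomial-time machinery developed in \cite{aslep}. After replacing $\iota$ by a cofibration and $\varphi$ by a Kan fibration, I would construct a tower
\[
\cdots \fib \Pnew \fib \Pold \fib \cdots \fib B
\]
with $Y$ weakly equivalent to the limit and each stage a principal twisted $K(\pin,\thedim)$-fibration classified by a $k$-invariant $\kn\in H^{\thedim+1}(\Pold;\pin)$. Simple connectivity of $B$ makes the coefficient systems $\pin$ explicit, and only finitely many stages (up to roughly $\dim X+1$) are needed.

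For \ref{thm:main_pta}, I would reinterpret $f\simeq g$ rel $A$ over $B$ as the existence of a dashed lift in
\[
\xymatrixc{
\partial I\times X\cup_{\partial I\times A}I\times A \ar[r]^-{(f,g,\alpha\circ\pr)} \ar@{ >->}[d] & Y \ar@{->>}[d]^-\varphi \\
I\times X \ar[r]_-{\beta\circ\pr} \ar@{-->}[ru] & B
}
\]
in $A/\sSet/B$, and resolve it stage by stage up the tower. The obstruction to promoting a partial lift from $\Pold$ to $\Pnew$ lives in a relative cohomology group of the pair $(I\times X,\partial I\times X\cup I\times A)$ with coefficients in $\pin$; collapsing the cylinder direction identifies this with a computable group of the form $H^{\thedim}(X,A;\pin)$ (with a degree shift). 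Each such group is presented effectively by \cite{aslep}, and when the obstruction vanishes a concrete lift is extracted; finite termination is guaranteed because $\dim X$ is fixed.

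For \ref{thm:main_ptb}, the pinch comultiplication on $\Sigma_B X$ endows $[\Sigma_B X,Y]^{\Sigma_B A}_B$ with its natural group structure, and the zero section $o\colon B\to Y$ propagates through the Postnikov tower to supply a compatible section at each stage, so that each $[\Sigma_B X,P_\thedim]^{\Sigma_B A}_B$ is itself a group. Applying $[\Sigma_B X,-]^{\Sigma_B A}_B$ to the fibre sequence $K(\pin,\thedim)\to\Pnew\to\Pold$ yields an exact sequence
\[
(\text{subquotient of }H^{\thedim}(\Sigma_B X,\Sigma_B A;\pin)) \lra [\Sigma_B X,\Pnew]^{\Sigma_B A}_B \lra [\Sigma_B X,\Pold]^{\Sigma_B A}_B \lra H^{\thedim+1}(\Sigma_B X,\Sigma_B A;\pin),
\]
exhibiting $[\Sigma_B X,\Pnew]^{\Sigma_B A}_B$ as an extension of a computable subgroup of $[\Sigma_B X,\Pold]^{\Sigma_B A}_B$ by a computable abelian group. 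Since extensions of fully-effective polycyclic groups are again fully-effective polycyclic (Section~\ref{s:polycyclic}), an induction up the tower produces such a presentation of $[\Sigma_B X,Y]^{\Sigma_B A}_B$.

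The main obstacle will be maintaining polynomial complexity across the induction: the sizes of the Postnikov stages, the representation of twisted cochains and obstruction classes, and the sequence of polycyclic extensions must all stay bounded uniformly in the input size when dimensions are fixed. This hinges on invoking the polynomial-time guarantees of \cite{aslep} at every step and carefully controlling the auxiliary simplicial models for $\Sigma_B X$, $\Sigma_B A$, and the pair $(I\times X,\partial I\times X\cup I\times A)$; a secondary subtlety is the consistent use of the zero section $o$ along the tower to keep the group structure and the kernel-cokernel identifications compatible at every stage.
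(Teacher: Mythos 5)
Your overall architecture (effective Moore--Postnikov tower, stagewise obstructions, polycyclic extensions) matches the paper's, but there is a genuine gap at the heart of part~\ref{thm:main_pta}: the stagewise obstruction is not well defined, and the greedy ``compute the obstruction, check whether it vanishes, extract a lift'' loop fails. The obstruction to promoting a lift from $\Pold$ to $\Pnew$ depends on the choice of the partial lift (equivalently, of the nullhomotopy) at the previous stage; a nonzero obstruction for one choice may become zero for another. What must be decided is whether the obstruction class lies in its \emph{indeterminacy}, which is exactly the image of the boundary map $\partial\colon\Gp{\then-1}{\theq+1}\to H^{\then-\theq}(X,A;\pi_\then)$ in the exact sequence \eqref{e:les}, and computing $\im\partial$ requires a finite generating set of the (generally non-abelian) group of homotopies $\Gp{\then-1}{\theq+1}$ one stage down. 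This is precisely why the paper proves \ref{thm:main_pta} and \ref{thm:main_ptb} by a single intertwined induction --- \clnull{n} uses \clgen{n-1}, and \clpoly{n} uses \clnull{n-1} --- and why the polycyclic machinery is needed even for the decision problem in \ref{thm:main_pta}, not only for the group computation in \ref{thm:main_ptb}. Your proposal treats the two parts as independent and never supplies the generators needed to test membership in $\im\partial$.

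The same omission resurfaces inside your argument for \ref{thm:main_ptb}. The abelian term in the extension is $\coker\partial$, not all of $H^{\then-\theq}(\Sigma_BX,\Sigma_BA;\pi_\then)$, and presenting this cokernel effectively again requires generators of the doubly-suspended groups $\Gp{\then-1}{\theq+1}$; this forces one to carry the whole family $\Gp{\then}{\theq}$ for all $\theq\geq 1$ through the induction rather than just the case $\theq=1$ that the theorem asks for. In addition, to apply the extension result (Proposition~\ref{p:poly_extension}) one must compute set-theoretic sections $t$ and $\sigma$ of the short exact sequence; the construction of $t$ on $\ker\pnst$ uses the nullhomotopy algorithm \clnull{n-1}, another point where the two halves of the theorem cannot be separated. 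Finally, a presentational difference worth noting: rather than formulating $f\simeq g$ as a cylinder lifting problem, the paper pulls the tower back along $\beta\colon X\to B$ to obtain a pointed tower over $X$ with zero section $(\id,g_\then)$, reducing the comparison of $f$ and $g$ to a nullhomotopy test; this is equivalent to your formulation but is what makes the zero-section formalism of part \ref{thm:main_ptb} directly applicable to part \ref{thm:main_pta}.
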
}

Theorems~\ref{thm:main_simple}~and~\ref{thm:suspension_simple} are obtained from Theorem~\ref{thm:main} by setting $A=*$ and $B=*$.

We remark that \cite{aslep} also covers the possibility that all spaces are equipped with a free action of a fixed finite group $G$ and all maps and homotopies are required to be $G$-equivariant. This is also the case here but we have decided not to complicate the statement even further. We believe that an interested reader may fill in details easily.

\subsection*{Notation}

We denote the \emph{standard $n$-simplex} by $\stdsimp n$, its $i$-th \emph{vertex} by $i$, its $i$-th \emph{face} by $d_i\stdsimp n$ and its \emph{boundary} by $\partial\stdsimp n$. The $i$-th \emph{horn} in $\stdsimp n$, i.e.\ the simplicial subset spanned by the faces $\bdry_j\stdsimp n$, $j\neq i$, will be denoted $\horn{n}{i}$. For simplicity, we will also denote $I=\stdsimp 1$. Then $\partial I^\theq$ is the obvious boundary of the \emph{$\theq$-cube}, i.e.\ of the $\theq$-fold product $I^\theq=I\times\cdots\times I$.

\section{Moore--Postnikov towers}\label{sec:Moore_Postnikov}

The proof of Theorem~\ref{thm:main} relies on computations in the Moore--Postnikov tower of $Y$ over $B$. The tower has been constructed in \cite{polypost,aslep}. Here we only give a brief summary of the main results concerned with the construction and computations in the tower.

\subsection*{Definition of the Moore--Postnikov tower}

Let $\varphi\colon Y\to B$ be a map. A (simplicial)
\emph{Moore--Postnikov tower} for $\varphi$ is a commutative diagram
\[\xymatrix{
& & {} \ar@{.}[d] \\
& & \Pnew \ar[d]^-{\pn} \ar@/^30pt/[ddd]^-{\psi_\thedim} \\
& & \Pold \ar@{.}[d] \\
Y \ar[uurr]^<>(.6){\varphi_\thedim} \ar[urr]_<>(.6){\varphi_{\thedim-1}} \ar[rr]_<>(.6){\varphi_1} \ar[drr]_<>(.6){\orightbox{\scriptstyle\varphi={}}{\scriptstyle\varphi_0}}
& & P_{1} \ar[d]^-{p_1} \\
& & \leftbox{P_{0}}{{}=B}
}\]
satisfying the following conditions:
\begin{itemize}[topsep=2pt,itemsep=2pt,parsep=2pt,leftmargin=\parindent]
\item
The induced map $\varphi_{\thedim*}\colon \pi_\thedimm(Y)\to \pi_{\thedimm}(\Pnew)$ is an isomorphism for $0\leq\thedimm\leq\thedim$ and an epimorphism for $\thedimm=\thedim+1$.

\item
The induced map $\psi_{\thedim*}\colon\pi_\thedimm(\Pnew)\to\pi_\thedimm(B)$ is an isomorphism for $\thedimm\ge\thedim+2$ and a monomorphism for $\thedimm=\thedim+1$.

\item
There exists a pullback square
\[\xymatrix{
\Pnew \pb \ar[r] \ar[d]_-\pn & E(\pin,\thedim) \ar[d]^{\delta} \\
\Pold \ar[r]_-\knp & K(\pin,\thedim+1)
}\]
identifying $\Pnew$ with the pullback $\Pold\times_{K(\pin,\thedim+1)}E(\pin,\thedim)$. Here, $K(\pi_n,n+1)$ is the Eilenberg--MacLane space and $E(\pi_n,n)$ its path space. These have standard simplicial models with $K(\pi_n,n+1)$ a minimal complex and $\delta$ a minimal fibration, see~\cite{may}.
\end{itemize}

From the computational perspective, the Moore--Postnikov tower faces the following problem: the standard simplicial models for Eilenberg--MacLane spaces, although minimal, are often infinite. This is solved by a somewhat technical notion of a simplicial set with effective homology that was introduced by Sergeraert etal. A detailed exposition is given in \cite{SergerGenova} and an extension to free actions of a finite group $G$ is described in \cite{aslep}. We will not need an explicit definition here -- the main property for us will be that all simplices have a well defined representation in a computer. Thus, for example, a simplicial map $X\to P_n$ is given by a finite amount of data. We also recall that a map is said to be \emph{computable} if an algorithm is provided that evaluates this map at a given element.

We have the following theorem, whose non-fibrewise (i.e.\ with $B=*$) version is explained in much more detail in \cite{polypost}.

\begin{thm}[{\cite[Theorem~3.2]{aslep}}]\label{t:MP_tower}
There is an algorithm that, given a map $\varphi\colon Y\to B$ between finite simply connected simplicial sets and an integer $\thedim$, constructs the first $\thedim$ stages of a Moore--Postnikov tower for $\varphi$. The stages $P_\thedimm$ are constructed as simplicial sets with effective homology, and $\varphi_\thedimm$, $\kip$, $p_\thedimm$ as computable maps.\qed
\end{thm}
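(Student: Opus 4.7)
The plan is to proceed by induction on the stage $\thedim$, building each $\Pnew$ together with its effective homology structure and the requisite computable maps. The base case $\thedim=0$ is trivial: set $P_0=B$ with $\varphi_0=\varphi$, since $B$ is given as a finite simplicial set and hence carries effective homology automatically.

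For the inductive step, assume the tower has been built up to $\Pold$, so that $\varphi_{\thedim-1}$ induces a $\pi_\thedimm$-isomorphism for $\thedimm\le\thedim-1$ and an epimorphism on $\pi_\thedim$, while $\psi_{\thedim-1}$ is a $\pi_\thedimm$-iso for $\thedimm\ge\thedim+1$. Consider the homotopy fibre $F$ of $\varphi_{\thedim-1}$, which is then $(\thedim-1)$-connected, so by Hurewicz $\pin\cong\pi_\thedim(F)\cong H_\thedim(F)$. Since $F$ is realised as a pullback of computable maps between spaces with effective homology, its effective chain complex, and in particular $H_\thedim(F)$ as a finitely presented abelian group, is algorithmically computable by the twisted Eilenberg--Zilber theorem and the basic perturbation lemma \cite{SergerGenova}.

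Next one needs the $k$-invariant $\knp\colon\Pold\to K(\pin,\thedim+1)$ as a computable simplicial map. The class is classically obtained as the transgression, in the Serre spectral sequence of $\varphi_{\thedim-1}$, of a fundamental class of $F$ in degree $\thedim$; effectively one extracts a cocycle on $\Pold$ from the effective homology of $\varphi_{\thedim-1}$ and realises it as a simplicial map into the minimal model of $K(\pin,\thedim+1)$, whose simplices are in bijection with normalised cocycles on $\Delta^\bullet$. The pullback $\Pnew=\Pold\times_{K(\pin,\thedim+1)}E(\pin,\thedim)$ is then a finite computable construction, and $\varphi_\thedim$ is obtained by lifting $\varphi_{\thedim-1}$ along $\pn$ via the explicit chain-level null-homotopy of $\knp\circ\varphi_{\thedim-1}$.

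The main obstacle is equipping $\Pnew$ with effective homology so that the induction may continue. Both $K(\pin,\thedim+1)$ and its path space $E(\pin,\thedim)$ carry standard effective homology via the bar construction on a small free resolution of $\pin$, and the pullback of a computable map along a fibration carrying effective homology preserves effective homology by Brown's perturbation lemma applied to the twisted tensor product. Carrying this out uniformly---and, in order to get the polynomial-time statements used later in the paper, bounding the sizes of the effective models stage by stage, as addressed in \cite{polypost,aslep}---is the technical heart of the argument; once in place, the induction yields $P_\thedimm$, $\varphi_\thedimm$, $\kip$ and $p_\thedimm$ with the required properties.
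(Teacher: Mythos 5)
The paper offers no proof of this statement: it is quoted verbatim from \cite[Theorem~3.2]{aslep} (hence the \textit{\qedsymbol} in the statement), with the non-fibrewise case detailed in \cite{polypost}. Your inductive sketch --- computing $\pi_\thedim$ as $H_\thedim$ of the $(\thedim-1)$-connected homotopy fibre of $\varphi_{\thedim-1}$ via effective homology, extracting the $k$-invariant as a computable cocycle, forming the pullback along $\delta$, and propagating effective homology through twisted products by the Eilenberg--Zilber reduction and the basic perturbation lemma --- is essentially the same construction carried out in those references, so there is nothing to add beyond deferring the technical details to them.
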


From now on, we will assume that $\iota$ is an inclusion; if this was not the case, simply replace the space $X$ in the square \eqref{e:A/sSet/B} by the mapping cylinder of $\iota$, i.e.\ the space $X^\mathrm{cof}=(I\times A)\cup_\iota X$.

\begin{thm}[{\cite[Theorem~3.3]{aslep}}]\label{t:n_equivalence}
The map $\varphin \colon Y\to\Pnew$ induces a bijection $\varphinst\colon[X,Y]^A_B \to [X,\Pnew]^A_B$ for every $\thedim$-dimensional simplicial set $X$.\qed
\end{thm}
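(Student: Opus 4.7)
The plan is to reduce the statement to standard simplicial obstruction theory for the pair $(X,A)$. Using a fibrewise mapping-path construction over $B$, I first replace $\varphi_n$ by a Kan fibration in $\sSet/B$ with homotopy-equivalent source, so that the strict fibre $F$ of $\varphi_n$ coincides with its homotopy fibre. The two $\pi_*$-conditions defining the $n$-th stage of the Moore--Postnikov tower state that $\varphi_{n*}$ is an isomorphism on $\pi_i$ for $i\le n$ and an epimorphism on $\pi_{n+1}$; fed into the long exact sequence of $F\to Y\to P_n$, this forces $\pi_i(F)=0$ for $0\le i\le n$, i.e.\ $F$ is $n$-connected. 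Both $Y$ and $P_n$ are simply connected, so the homotopy type of $F$ is also independent of the basepoint.

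\textbf{Surjectivity of $\varphi_{n*}$.} Given $g\colon X\to P_n$ in $A/\sSet/B$ with $g|_A=\varphi_n\alpha$, I construct a lift $\tilde g\colon X\to Y$ over $B$ by induction on the dimension of non-degenerate simplices of $X$ not lying in $A$. For each such $k$-simplex $\sigma\colon\Delta^k\to X$, extending the partial lift along $\sigma$ amounts to the lifting problem
\[\xymatrixc{
\partial\Delta^k \ar[r] \ar[d] & Y \ar[d]^-{\varphi_n} \\
\Delta^k \ar[r]_-{g\sigma} \ar@{-->}[ur] & P_n
}\]
whose obstruction is an element of $\pi_{k-1}(F)$ (the case $k=0$ reducing to non-emptiness of the fibre). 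Since $k\le n$ throughout, this group vanishes, so the inductive step always succeeds and $\tilde g$ exists.

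\textbf{Injectivity of $\varphi_{n*}$.} Given $f_1,f_2\colon X\to Y$ in $A/\sSet/B$ and a rel-$A$, fibrewise homotopy $H\colon X\times I\to P_n$ from $\varphi_n f_1$ to $\varphi_n f_2$, I lift $H$ to a rel-$A$, fibrewise homotopy $\tilde H\colon X\times I\to Y$ between $f_1$ and $f_2$. This is a lifting problem for the inclusion $(X\times\partial I)\cup(A\times I)\hookrightarrow X\times I$ against $\varphi_n$. Non-degenerate relative cells of this inclusion correspond bijectively to non-degenerate simplices of $X\setminus A$ shifted up by one in dimension, so have dimension at most $n+1$. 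The obstruction for a relative $(k+1)$-cell lies in $\pi_k(F)$, which vanishes for $k\le n$; hence $\tilde H$ exists.

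The main technical obstacle is making simplicial obstruction theory work cleanly in the fibrewise-under category $A/\sSet/B$: one must choose a Kan-fibration model of $\varphi_n$ over $B$ (not merely over a point), verify that vanishing of $\pi_i(F)$ detects solvability of the boundary-lifting problem for simplicial sets, and check that the cell-by-cell construction respects both the prescribed map $\alpha\colon A\to Y$ and the projection to $B$. Once this framework is in place, the proof is a routine combination of the long exact sequence of the fibration and skeletal induction.
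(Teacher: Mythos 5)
The paper does not prove this statement at all: it is imported verbatim from \cite[Theorem~3.3]{aslep} (hence the \emph{qed} in the statement itself), so there is no internal proof to compare against. Your argument is the standard obstruction-theoretic one and is essentially correct: the two $\pi_*$-conditions on $\varphi_n$ do force the (homotopy) fibre $F$ to be $n$-connected via the long exact sequence, and then surjectivity needs $\pi_{k-1}(F)=0$ for $k\le n$ while injectivity needs $\pi_k(F)=0$ up to $k=n$ (the binding case, which is exactly where the epimorphism on $\pi_{n+1}$ is used), so the dimension count is tight and correct. One small inaccuracy: the relative non-degenerate simplices of $(X\times\partial I)\cup(A\times I)\hookrightarrow X\times I$ are not in bijection with those of $X$ shifted up by one (each non-degenerate $k$-simplex of $X$ not in $A$ contributes $k+1$ non-degenerate $(k+1)$-simplices of the prism, via shuffles); this does not affect your proof, since only the dimension bound $\le n+1$ matters and all the relevant obstruction groups vanish, but the bijection claim as stated is false. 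The remaining work you flag (fibrant replacement of $\varphi_n$ over $B$ compatible with the structure maps from $A$ and to $B$) is indeed where the cited reference spends its effort.
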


This theorem allows us to replace the square \eqref{e:A/sSet/B} by
\[\xymatrix{
A \ar[r]^-\alphan \ar@{ >->}[d]_-\iota & \Pnew \ar@{->>}[d]^-\psin \\
X \ar[r]_-\beta \ar[ru] \POS?<>(.5)+/:a(90).5pc/*{\scriptstyle\fn} & B
}\]
in which $\alphan=\varphin\alpha$ and $\fn=\varphin f$. Since $\psin$ is a Kan fibration, the homotopy classes in $[X,\Pnew]^A_B$ are represented by simplicial maps $X\to\Pnew$ under $A$ and over $B$ (no replacements needed).

\subsection*{Computations with Moore--Postnikov towers}

For our algorithm, it will be essential to lift homotopies. Moreover, homotopy concatenation will serve as the main tool in the computations with maps defined on suspensions. The proofs of the results in this subsection can be found \cite{aslep}. We start with a general algorithm for lifting maps by one stage.

\begin{prop}[{\cite[Proposition~3.5]{aslep}}] \label{prop:lift_ext_one_stage}
There is an algorithm that, given a diagram
\[\xymatrix{
A \ar[r] \ar@{ >->}[d] & \Pnew \ar@{->>}[d]^-{\pn} \\
X  \ar[r] \ar@{-->}[ru] & \Pold
}\]
decides whether a diagonal exists. If it does, it computes one.
\end{prop}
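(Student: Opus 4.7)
The plan is to exploit the pullback description
\[\Pnew=\Pold\times_{K(\pin,\thedim+1)}E(\pin,\thedim)\]
to translate the lifting problem into a question about the path-space fibration $\delta\colon E(\pin,\thedim)\to K(\pin,\thedim+1)$, which can then be resolved algorithmically through the effective homology of the involved spaces. By the universal property of the pullback, producing a diagonal $X\to\Pnew$ that lifts the given $g\colon X\to\Pold$ and extends the given $a\colon A\to\Pnew$ is equivalent to producing a map $e\colon X\to E(\pin,\thedim)$ satisfying $\delta\circ e=\knp\circ g$ and restricting on $A$ to the second component of $a$. Thus $e$ is a filling, through $\delta$, of the cocycle $c=\knp\circ g\colon X\to K(\pin,\thedim+1)$ subject to a prescribed filling on $A$.

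Next I would invoke standard obstruction theory. Because $\delta$ is a minimal Kan fibration with contractible total space and fibre $K(\pin,\thedim)$, a skeleton-by-skeleton construction extends the restriction $a|_A$ over the $\thedim$-skeleton of $X$ relative to $A$, every partial lift pushing one dimension higher because the fibre is $(\thedim-1)$-connected. The failure of these choices to define a genuine lift of $c$ is recorded by a single relative cellular cocycle $\onew(c,a)\in Z^{\thedim+1}(X,A;\pin)$ whose cohomology class is the unique obstruction. When it vanishes, any bounding cochain yields a correction producing an honest lift $e$, and hence the desired diagonal.

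Algorithmically, all ingredients are computable: the maps $g$, $a$, $\knp$ come from the input or from Theorem~\ref{t:MP_tower}, the coefficient group $\pin$ is given as part of the tower data, and $K(\pin,\thedim+1)$, $E(\pin,\thedim)$ carry effective homology. The finitely presented relative cohomology $H^{\thedim+1}(X,A;\pin)$ is computable from the simplicial structures of $X$ and $A$ together with $\pin$, the class $[\onew(c,a)]$ is evaluated in it, and in the zero case a bounding cochain is extracted by solving a linear system over $\bbZ$ via the Smith normal form. From this cochain the corrective adjustment, and in turn the lift $e$, is assembled explicitly.

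The main obstacle is that $K(\pin,\thedim+1)$ and $E(\pin,\thedim)$ are typically infinite simplicial sets, so the obstruction cocycle cannot be read off or bounded by a naive enumeration of simplices. This is precisely what effective homology is engineered for: it replaces $K(\pin,\thedim+1)$ by a chain-equivalent finite complex together with explicit chain-level equivalences, along which cocycles and their primitives can be transported back and forth. Keeping all data inside this finite model --- and, in the fibrewise setting, doing so compatibly with the projection to $B$ --- is what allows the algorithm to terminate in polynomial time when $\dim A$ and $\dim X$ are fixed.
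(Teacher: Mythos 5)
Your argument is, in essence, the proof that the paper imports from \cite{aslep} (the proposition is cited there as Proposition~3.5, not reproved here): use the pullback description $\Pnew=\Pold\times_{K(\pin,\thedim+1)}E(\pin,\thedim)$ to reduce the diagonal to a map $e\colon X\to E(\pin,\thedim)$ over $\knp g$ extending the prescribed map on $A$, observe that the sole obstruction is a class in $H^{\thedim+1}(X,A;\pin)$, and decide its vanishing (and extract a primitive) by Smith normal form. The only inessential wrinkle is your final paragraph: because simplicial maps into the minimal models of $K(\pin,\thedim+1)$ and $E(\pin,\thedim)$ are literally cocycles, respectively cochains, on $X$, the entire computation already lives in the finite complex $C^*(X,A;\pin)$, so no transport along effective-homology equivalences of the Eilenberg--MacLane spaces is needed at this particular step.
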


The following two special cases apply even to lifting through multiple stages.

\begin{prop}[homotopy lifting, {\cite[Proposition~3.6]{aslep}}] \label{prop:homotopy_lifting}
Given a diagram
\[\xymatrix{
(\vertex\thedimm\times X)\cup(I\times A) \ar[r] \ar@{ >->}[d]_-\sim & \Pnew \ar@{->>}[d] \\
\stdsimp{1}\times X  \ar[r] \ar@{-->}[ru] & P_\theotherdim
}\]
where $\thedimm\in\{0,1\}$, it is possible to compute a diagonal. In other words, one may lift homotopies in Moore--Postnikov towers algorithmically.
\end{prop}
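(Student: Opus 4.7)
The plan is to reduce to the one-stage lifting problem handled by Proposition~\ref{prop:lift_ext_one_stage} and iterate upward through the Moore--Postnikov tower. I will proceed inductively on $k$ from $k=\theotherdim$ up to $k=\thedim$, constructing lifts $H_k\colon\stdsimp 1\times X\to P_k$ that extend the composite $(\vertex\thedimm\times X)\cup(I\times A)\to\Pnew\to P_k$ and satisfy $p_{k+1}\circ H_{k+1}=H_k$. The base case $H_\theotherdim$ is the prescribed bottom map of the square, and to pass from $H_k$ to $H_{k+1}$ I will feed Proposition~\ref{prop:lift_ext_one_stage} the one-stage square whose left edge is the inclusion $(\vertex\thedimm\times X)\cup(I\times A)\hookrightarrow \stdsimp 1\times X$, right edge the fibration $p_{k+1}\colon P_{k+1}\to P_k$, bottom edge $H_k$, and top edge the restriction of the prescribed partial lift to $P_{k+1}$. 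Iterating $\thedim-\theotherdim$ times will produce the desired $H_\thedim$.

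The only point that will need verification is that Proposition~\ref{prop:lift_ext_one_stage} actually returns a diagonal at each stage, rather than reporting non-existence. This will follow from a standard model-theoretic observation: the left vertical inclusion is the pushout product of the trivial cofibration $\vertex\thedimm\hookrightarrow I$ with the cofibration $\iota\colon A\hookrightarrow X$, hence is itself a trivial cofibration by the pushout-product axiom for the standard model structure on $\sSet$. On the other hand, each $p_{k+1}\colon P_{k+1}\to P_k$ is a pullback of the Kan fibration $\delta\colon E(\pi_{k+1},k+1)\to K(\pi_{k+1},k+2)$ and is therefore itself a Kan fibration. Since trivial cofibrations have the left lifting property against Kan fibrations, a diagonal must exist, and so Proposition~\ref{prop:lift_ext_one_stage} will find one.

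The heart of the argument, and the only real obstacle, is not this reduction but the one-stage routine of Proposition~\ref{prop:lift_ext_one_stage} itself, which relies on an effective treatment of the pullback presentation of each $P_{k+1}$ and the Eilenberg--MacLane spaces entering it. Granted that routine, the present proposition will amount to a loop of at most $\thedim-\theotherdim$ one-stage lifts, with the running time bounded accordingly.
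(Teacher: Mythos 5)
Your proof is correct and is essentially the intended argument: this paper states the proposition without proof (deferring to \cite{aslep}), and the argument there is precisely your reduction, namely iterating the one-stage lifting of Proposition~\ref{prop:lift_ext_one_stage} through the stages $P_\theotherdim,\dots,\Pnew$, with the existence of a diagonal at each stage guaranteed because $(\vertex\thedimm\times X)\cup(I\times A)\hookrightarrow I\times X$ is an anodyne extension (a pushout-product of $\vertex\thedimm\hookrightarrow I$ with $A\hookrightarrow X$) while each $p_k$ is a Kan fibration pulled back from $\delta$. I see no gaps; the only mildly implicit point, that Proposition~\ref{prop:lift_ext_one_stage} may be invoked for the pair $(I\times X,(\vertex\thedimm\times X)\cup(I\times A))$ rather than the original $(X,A)$, is consistent with how the paper itself uses that proposition.
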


The second special case will be used in Section~\ref{s:concatenation} to concatenate homotopies.

\begin{prop}[homotopy concatenation, {\cite[Proposition~3.7]{aslep}}] \label{prop:homotopy_concatenation}
Given a diagram
\[\xymatrix{
(\horn{2}{i}\times X)\cup(\stdsimp{2}\times A) \ar[r] \ar@{ >->}[d]_-\sim & \Pnew \ar@{->>}[d] \\
\stdsimp{2}\times X  \ar[r] \ar@{-->}[ru] & P_\theotherdim
}\]
where $\thedimm\in\{0,1,2\}$, it is possible to compute a diagonal. In other words, one may concatenate homotopies in Moore--Postnikov towers algorithmically.
\end{prop}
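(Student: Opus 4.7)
The plan is to climb the Moore--Postnikov tower one stage at a time, reducing the multi-stage concatenation to iterated one-stage lifts that are handled by Proposition~\ref{prop:lift_ext_one_stage}. I induct on $\thedim - \theotherdim$. In the base case $\thedim = \theotherdim$, no lift is required and the bottom horizontal itself serves as the diagonal: it agrees with the given top horizontal on $(\horn{2}{\thedimm}\times X)\cup(\stdsimp{2}\times A)$ by commutativity of the outer square.

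For the inductive step $\thedim > \theotherdim$, first apply the induction hypothesis to the square obtained by replacing the top-right corner $\Pnew$ with $\Pold$ (precomposing the top horizontal by $\pn$). This produces a map $\tau\colon \stdsimp{2}\times X \to \Pold$ extending the given sub-complex data. It then remains to solve the one-stage lifting problem
\[\xymatrix{
(\horn{2}{\thedimm}\times X)\cup(\stdsimp{2}\times A) \ar[r] \ar@{ >->}[d] & \Pnew \ar@{->>}[d]^-{\pn} \\
\stdsimp{2}\times X  \ar[r]_-\tau \ar@{-->}[ru] & \Pold
}\]
The left-hand side is an inclusion of finite simplicial sets, so Proposition~\ref{prop:lift_ext_one_stage} decides existence of a diagonal and computes one if it exists.

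The only remaining point is to guarantee that the diagonal does exist, so that Proposition~\ref{prop:lift_ext_one_stage} returns a genuine lift rather than a negative verdict. This is a purely formal consequence of the Kan--Quillen model structure: the left vertical is the pushout-product of the anodyne horn inclusion $\horn{2}{\thedimm}\hookrightarrow\stdsimp{2}$ with the cofibration $\iota\colon A\hookrightarrow X$, hence a trivial cofibration; since $\pn$ is a Kan fibration, a diagonal exists by the standard lifting property. Thus no genuine obstacle arises: existence is automatic and all the computational work is delegated to the one-stage algorithm of Proposition~\ref{prop:lift_ext_one_stage}, invoked $\thedim - \theotherdim$ times.
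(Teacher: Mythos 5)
Your argument is correct. Note that this paper does not actually prove the proposition---it is imported from \cite{aslep} (Proposition~3.7), and the proof there is essentially the reduction you describe: the left-hand vertical map is anodyne, being the pushout-product of the $\thedimm$-th horn inclusion into $\stdsimp{2}$ with the cofibration $A\hookrightarrow X$ (this is what the $\sim$ decorating that arrow in the statement records), so a diagonal against the Kan fibration $\Pnew\fib P_\theotherdim$ exists for purely formal reasons, and it is computed by running the one-stage algorithm of Proposition~\ref{prop:lift_ext_one_stage} once per stage from $P_\theotherdim$ up to $\Pnew$.
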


\section{Maps out of suspensions I}

\subsection*{Pointed fibrations}

From now on, we will assume that $\psin\colon\Pnew\to B$ is equipped with a zero section $o\colon B\to\Pnew$. Further, we will assume that $\alpha_n=o$, i.e.\ $[X,\Pnew]^A_B$ will now denote the set of homotopy classes of maps $f\colon X\to \Pnew$ that are over $B$ and \emph{zero} on $A$.

\subsection*{Homotopy concatenation}\label{s:concatenation}
We will now use Proposition~\ref{prop:homotopy_concatenation} to make $[\Sigma_BX,\Pnew]^{\Sigma_BA}_B$ into a group. It is simple to see that this set is isomorphic to $[I\times X,\Pnew]^{(\partial I\times X)\cup(I\times A)}_B$. We will work with the second description and represent the elements of $[\Sigma_BX,\Pnew]^{\Sigma_BA}_B$ by fibrewise homotopies $I\times X\to\Pnew$, starting and finishing at the zero map and zero on $I\times A$.

Let $h_2\comma h_0\colon I\times X\to\Pnew$ be two such homotopies. Viewing each $h_i$ as defined on $d_i\stdsimp 2\times X$, we obtain a single map $\horn 21\times X\to\Pnew$ which, together with the zero map $o\colon\stdsimp 2\times A\to\Pnew$, prescribes the top map in Proposition~\ref{prop:homotopy_concatenation}. The bottom map is the composition $\stdsimp 2\times X\xra{\pr}X\xra{\beta}B$, i.e.\ we take $\theotherdim=0$. Let $\stdsimp 2\times X\to\Pnew$ be the diagonal map computed by Proposition~\ref{prop:homotopy_concatenation}. Then we will call its restriction to $\bdry_1\stdsimp 2\times X$ the \emph{concatenation} of $h_2$ and $h_0$ and denote it by $h_0+h_2$. The inverse of a homotopy is computed similarly. The situation is summarized in the following subsection.

\subsection*{Semi-effective groups}

In our setting, a group $G$ is represented by a set $\mcG$, whose elements are called \emph{representatives}; we also assume that the representatives can be stored in a computer. For $\gamma\in\mcG$, let $[\gamma]$ denote the element of $G$ represented by $\gamma$. The representation is generally non-unique -- we may have $[\gamma]=[\delta]$ for $\gamma\ne\delta$. We will write our groups additively.

\begin{defn}
We call $G$ represented in the above way \emph{semi-effective}, if algorithms for the following three tasks are available:
\begin{itemize}[topsep=2pt,itemsep=2pt,parsep=2pt,leftmargin=\parindent]
\item
	provide an element $o\in\mcG$ with $[o]=0$ (the neutral element);
\item
	given $\gamma\comma\delta\in\mcG$, compute $\varepsilon\in\mcG$ with $[\varepsilon]=[\gamma]+[\delta]$;
\item
	given $\gamma\in\mcG$, compute $\delta\in\mcG$ with $[\delta]=-[\gamma]$.
\end{itemize}
\end{defn}

An important example of a semi-effective group is the cohomology group $H^n(X,A;\pi)$. It is represented by maps $X\to K(\pi,n)$ that are zero on $A$. For the minimal model of $K(\pi,n)$ that we use throughout the paper, such maps are in a bijective correspondence with cocycles $Z^n(X,A;\pi)$, see \cite{may}. In this case, much more is true: since $Z^n(X,A;\pi)$ is finitely generated abelian, it is possible to decide whether a given element $[\gamma]$ is an integral combination of $[\gamma_1],\ldots,[\gamma_r]$; if this is the case, the coefficients $z_1,\ldots,z_r$ in the expression $[\gamma]=z_1[\gamma_1]+\cdots+z_r[\gamma_r]$ are computable too. Later, we will formalize this in the notion of a fully effective abelian group. Returning to the suspension, we have already obtained the following result.

\begin{prop}\label{p:semi_effective}
The set $[\Sigma_BX,\Pnew]^{\Sigma_BA}_B\cong[I\times X,\Pnew]^{(\partial I\times X)\cup(I\times A)}_B$ is a semi-effective group rep\-re\-sent\-ed by the set of all simplicial maps $I\times X\to\Pnew$ over $B$ that are zero on $(\partial I\times X)\cup(I\times A)$.\qed
\end{prop}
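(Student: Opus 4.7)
The first step is to justify the claimed bijection
\[
[\Sigma_BX,\Pnew]^{\Sigma_BA}_B\cong[I\times X,\Pnew]^{(\partial I\times X)\cup(I\times A)}_B.
\]
By definition of fibrewise suspension, a map $\Sigma_BX\to\Pnew$ over $B$ is the same as a map $I\times X\to\Pnew$ over $B$ that factors through the two collapses, i.e.\ is zero on $\partial I\times X$. The condition of being zero on $\Sigma_BA$ translates to being zero on $I\times A$. A homotopy $I\times(\Sigma_BX)\to\Pnew$ unfolds in the same way. Because $\psi_n$ is a Kan fibration (cf.\ Proposition~\ref{prop:homotopy_lifting}), every homotopy class has a simplicial representative, so the set $\mcG$ of simplicial maps $I\times X\to\Pnew$ that are over $B$ and zero on $(\partial I\times X)\cup(I\times A)$ serves as a legitimate set of representatives.

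Next I would exhibit the three algorithms. The neutral element is represented by the zero map $o\colon I\times X\xra{\pr}X\xra\beta B\xra oY$, which is trivially computable. Addition is exactly the construction carried out above: given representatives $h_0$ and $h_2$, assemble them with the zero map into a map from $(\horn{2}{1}\times X)\cup(\stdsimp 2\times A)$, extend via Proposition~\ref{prop:homotopy_concatenation}, and take the restriction to $d_1\stdsimp 2\times X$. For inversion, given $h$, prescribe $h$ on $d_2\stdsimp 2\times X$ and the zero map $o$ on $d_1\stdsimp 2\times X$ and on $\stdsimp 2\times A$, apply Proposition~\ref{prop:homotopy_concatenation} to the horn $\horn{2}{0}$, and read off the restriction to $d_0\stdsimp 2\times X$; this face witnesses the negative of $[h]$ because the filled 2-simplex is precisely the standard homotopy exhibiting $(-h)+h\simeq o$.

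What remains is to check that these operations descend to a well-defined group structure on $[\Sigma_BX,\Pnew]^{\Sigma_BA}_B$, i.e.\ that the outputs depend, up to homotopy rel.\ the appropriate subspaces, only on the homotopy classes of the inputs, and that associativity and the unit/inverse axioms hold. Independence of the choice of filling is a direct application of the Kan fibration property of $\psi_n$: two different 2-simplex fillings glue along their common horn into a map from $\partial\stdsimp 3\times X$ which extends over $\stdsimp 3\times X$ by another instance of the lifting lemma, producing the required homotopy between the two candidate sums. Independence of the choice of representative, associativity, and the axioms for the inverse all reduce in the same way to filling higher-dimensional simplices over $X$ while keeping the $A$-coordinates at $o$; this is the classical verification that the pinch map $\Sigma_BX\to\Sigma_BX\cup_B\Sigma_BX$ endows $[\Sigma_BX,\Pnew]^{\Sigma_BA}_B$ with a group structure, made concrete here by the algorithmic fillings of Proposition~\ref{prop:homotopy_concatenation}. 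The only potential obstacle is bookkeeping in the fibrewise-and-relative setting, but since all structure lives inside maps over $B$ that are zero on subcomplexes containing $A$, the standard arguments go through unchanged.
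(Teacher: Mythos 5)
Your proposal is correct and follows essentially the same route as the paper: the paper's proof consists precisely of the horn-filling construction of Proposition~\ref{prop:homotopy_concatenation} applied to $(\horn{2}{1}\times X)\cup(\stdsimp 2\times A)$ for addition (and the analogous horn for inversion), with the zero map as neutral element. The only difference is that you spell out the well-definedness and group-axiom verifications via higher simplex fillings, which the paper leaves implicit as standard.
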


\section{Deciding the existence of a homotopy}

\subsection*{An exact sequence associated with a fibration}

We start with the following notation: $K_{n+1}=B\times K(\pi_n,n+1)$ and $L_n=B\times K(\pi_n,n)$. There are maps
\[\Ln\xlra{j}\Pnew\xlra{\pn}\Pold\xlra{\kn}\Kn,\]
where $\kn$ is the ``fibrewise Postnikov invariant'' $(\psi_{\thedim-1},k_n')\colon\Pold\lra B\times K(\pi_n,n+1)$ and $j$ is the following embedding: writing $o=(o',o'')\colon B\to\Pnew\subseteq\Pold\times E(\pi_n,n)$, it is defined as $j(b,z)=(o'(b),o''(b)+z)$. The image of $j$ clearly consists precisely of those simplices of $\Pnew$ that map to the zero section $o'$ of $\Pold$. The following sequence of pointed sets is \emph{exact} by \cite[Theorem~4.8]{aslep} (the relevant parts of the proof do not use the stability assumption $n\leq 2d$):
\begin{equation}\label{e:les}
 [\Sigma_BX,\Pold]^{\Sigma_BA}_B \xlra{\partial}[X,\Ln]^A _B \xlra{\jnst}[X,\Pnew]^A _B \xlra{\pnst}[X,\Pold]^A _B\xlra{\knst}[X,\Kn]^A _B.
\end{equation}

The isomorphisms $[X,\Ln]^A _B\cong H^n(X,A;\pi_n)$ and $[X,\Kn]^A _B\cong H^{n+1}(X,A;\pi_n)$ show that these sets are abelian groups that can be computed easily. The group homomorphism $\partial$ is defined in the following way. Given a homotopy $h\colon I \times X \to P_{n-1}$, lift it to a homotopy $\widetilde h\colon I \times X \to P_{n}$ in such a way that $(\vertex 0 \times X) \cup (I \times A)$ maps to the zero section, using Proposition~\ref{prop:homotopy_lifting}. Since the restriction of $\widetilde h$ to $\vertex 1\times X$ takes values in the image of $j$, it could be interpreted as a map $X\to\Ln$. This map is then a representative of $\partial[h]$.

\subsection*{Proof of Theorem~\ref{thm:main_pta}}

We will prove Theorem~\ref{thm:main} by induction. First, we list a series of claims:

\begin{enumerate}[topsep=2pt,itemsep=2pt,parsep=2pt,labelindent=0.5em,leftmargin=*,label=\textbf{(null)${}_n$}]
\renewcommand{\theenumi}{\textbf{(gen)}}
\renewcommand{\labelenumi}{\textbf{(gen)${}_n$}}
\item\label{cl:gen}
	It is possible to compute a finite set of generators of $[I\times X,\Pnew]^{(\partial I\times X)\cup(I\times A)}_B$.
\renewcommand{\theenumi}{\textbf{(null)}}
\renewcommand{\labelenumi}{\textbf{(null)${}_n$}}
\item\label{cl:null}
	It is possible to decide whether a given map $f\colon X\to\Pnew$ under $A$ over $B$ and is nullhomotopic; when this is the case, it is possible to compute a nullhomotopy, i.e.\ a homotopy from the zero map to $f$.
\end{enumerate}

\begin{proof}[Proof of Theorem~\ref{thm:main_pta} from \clnull{n}]
Let $n=\dim X$. Since $[X,Y]^A_B\cong[X,\Pnew]^A_B$ by Theorem~\ref{t:n_equivalence}, it is enough to decide whether the corresponding maps $f_n\comma g_n\colon X\to\Pnew$ are homotopic. Taking the pullback of $\psin\colon\Pnew\to B$ along $\beta\colon X\to B$ yields another Moore--Postnikov stage (see \cite[Section~4.10]{aslep}), this time with a section $o=(\id,g_n)$, as in the following diagram:
\[
\xymatrix{
A\ar[r]^-{(\iota,\alpha_n)} \ar@{ >->}[d]_-\iota	 & X \times_{B} P_n  \ar@{->>}[d] \\
X\ar[r]_-{\id} \ar[ur] \POS?<>(.45)+/:a(90).5pc/*{\rotatebox{35}{$\scriptstyle(\id,f_n)$}} & X \ar@/_10pt/[u]_-{(\id,g_n)}
}
\]
Thus, according to \clnull{n}, it is possible to decide whether $(\id,f_n)$ is nullhomotopic, i.e.\ homotopic to $(\id,g_n)$. Clearly, this is equivalent to $f_n$ being homotopic to $g_n$.
\end{proof}

The claim \clnull{n} is proved by induction using \clgen{n-1}. This is essentially contained in \cite[Section~4.9]{aslep}; we reproduce the algorithm here for reader's convenience but omit the proof of correctness.

\begin{proof}[Proof of \clnull{n-1}${}+{}$\clgen{n-1}${}\Ra{}$\clnull{n}]
First, we compute a nullhomotopy $h'$ of the composition $p_n f\colon X\to\Pold$ by \clnull{n-1}. Next, we lift this nullhomotopy using Proposition~\ref{prop:homotopy_lifting} to a homotopy $\widetilde h'\colon f' \sim f$. Since $p_n f' = o$, we interpret $f'$ as a map $f'\colon X\to\Ln$. We use \clgen{n-1} to decide whether $[f'] \in \im \partial$ and further to compute $h''$ with $\partial[h'']= [f']$. Using Proposition~\ref{prop:lift_ext_one_stage}, it is possible to compute a lift $\widetilde h''$ that starts at the zero map and finishes at $f'$. Thus, the concatenation $h = \widetilde h'+\widetilde h''$, computed by Proposition~\ref{prop:homotopy_concatenation}, is a homotopy from the zero map to $f$. If either of $h'$, $h''$ fails to exist, the map $f$ is not nullhomotopic.
\end{proof}

Thus, it remains to prove \clgen{n}. To make the induction possible, we will have to strengthen the claim and compute more than just generators, namely the structure of a fully effective polycyclic group.

\section{Polycyclic groups}\label{s:polycyclic}

\subsection*{Fully effective abelian groups}

First, we recall from \cite{cmk} some basic computational aspects of abelian groups.

\begin{defn}
Let $G$, $H$ be semi-effective groups with sets of representatives $\mathcal{G}$, $\mathcal{H}$. A homomorphism $f\colon G \to H$ will be called a \emph{computable homomorphism} if there exists a computable mapping $\varphi\colon \mathcal{G} \to \mathcal{H}$ such that $f([\gamma])=[\varphi(\gamma)]$, i.e.\ if there is provided an algorithm that computes a representative of $f(g)$ from each representative of $g$.
\end{defn}

Given a semi-effective group, we would like to obtain some further information about it, e.g.\ compute a finite list of generators or solve the word problem. For abelian groups, this is accomplished easily with the help of the classification of finitely generated abelian groups:

\begin{defn}
We call a semi-effective abelian group $G$ \emph{fully effective} if there is given an isomorphism $G\cong\bbZ/q_1\oplus\cdots\oplus\bbZ/q_r$, computable together with its inverse. In detail, this consists of
\begin{itemize}[topsep=2pt,itemsep=2pt,parsep=2pt,leftmargin=\parindent]
\item
	a finite list of generators $g_1,\ldots,g_r$ of $G$ (given by representatives) and their orders $q_1,\ldots,q_r\in\{2,3,\ldots\}\cup\{0\}$ (where $q_\thedimm=0$ gives $\bbZ/q_\thedimm=\bbZ$),
\item
	an algorithm that, given $\gamma\in\mcG$, computes integers $z_1,\ldots,z_r$ so that $[\gamma]=z_1 g_1 + \cdots + z_r g_r$; each coefficient $z_i$ is unique within $\bbZ/q_i$.
\end{itemize}
\end{defn}

As explained, $H^n(X,A;\pi)$ is fully effective when represented by maps $X\to\Ln$ over $B$ that are zero on $A$. This is provided by a Smith normal form algorithm, see \cite[Lemma~4.6]{aslep}.

\begin{lem}[kernel and cokernel, {\cite[Lemmas~2.2~and~2.3]{cmk}}]\label{l:ker_coker}
Let $f\colon G\to H$ be a computable homomorphism of fully effective abelian groups. Then both $\ker f$ and $\coker f$ can be represented as fully effective abelian groups. More generally, the computation of $\coker f$ only requires $H$ fully effective abelian, a list of generators of $G$ (not necessarily abelian) and $f$ computable.
\end{lem}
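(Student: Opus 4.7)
The plan is to reduce every step to Smith-normal-form computations over $\bbZ$. Using the fully effective structures, fix computable surjections $\pi_G\colon\bbZ^r\to G$ and $\pi_H\colon\bbZ^s\to H$ whose kernels $N\subseteq\bbZ^r$ and $M\subseteq\bbZ^s$ are the diagonal relation lattices determined by the orders of the chosen generators of $G$ and $H$. Evaluating $f$ on the generators of $G$ and expressing the results in the generators of $H$ yields an integer matrix $F$ of size $s\times r$ that lifts $f$ to $\hat f\colon\bbZ^r\to\bbZ^s$; denote by $D_M$ the diagonal matrix whose columns generate $M$.

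For the cokernel, $\coker f\cong\bbZ^s/L$, where $L$ is generated by the columns of the block matrix $[F\mid D_M]$. The Smith normal form of $[F\mid D_M]$ produces invariant factors $d_1,\ldots,d_t$ (discarding those equal to $1$) together with unimodular change-of-basis matrices; these yield the isomorphism $\coker f\cong\bbZ/d_1\oplus\cdots\oplus\bbZ/d_t$ and an algorithm that, given $h\in\mcH$, first lifts $h$ to $\bbZ^s$ via the full effectivity of $H$ and then applies the change of basis to read off the coefficients.

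For the kernel, set $K:=\hat f^{-1}(M)\subseteq\bbZ^r$, so $K\supseteq N$ and $K/N\cong\ker f$. Standard Smith-normal-form techniques applied to $[F\mid -D_M]$ yield both an integral basis of $K$ and an algorithm expressing a given $v\in K$ in this basis. Expressing the generators of $N$ in the basis of $K$ then provides a presentation matrix for $K/N=\ker f$, and one final Smith normal form recognizes $\ker f\cong\bbZ/d_1'\oplus\cdots\oplus\bbZ/d_{t'}'$. To express a given $g\in\mcG$ lying in $\ker f$ -- membership being decidable by computing $f(g)$ and testing it against zero in $H$ -- lift $g$ to $\bbZ^r$ via the full effectivity of $G$, write it in the basis of $K$, and apply the change of basis from the final Smith normal form.

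For the weaker claim on $\coker f$, note that since $H$ is abelian, $\im f$ is generated as a subgroup of $H$ by $f(g_1),\ldots,f(g_r)$ whenever $g_1,\ldots,g_r$ generate $G$ as a group; consequently the cokernel algorithm above uses only the evaluations of $f$ on the generators of $G$ together with the full effectivity of $H$, so neither the expressibility side of a fully effective structure on $G$ nor the abelianness of $G$ is ever invoked. The main obstacle is the bookkeeping for the kernel: two successive Smith-normal-form reductions (first to compute $K$ and a basis thereof, then to reduce the subquotient $K/N$) must be threaded together so that their change-of-basis data compose into a genuinely invertible, algorithmically usable isomorphism; all the ingredients are classical, but their composition requires some care.
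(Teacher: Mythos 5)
The paper does not prove this lemma itself; it imports it verbatim from \cite[Lemmas~2.2~and~2.3]{cmk}, where the argument is exactly the Smith-normal-form reduction you describe (present $G$ and $H$ as $\bbZ^r/N$ and $\bbZ^s/M$, lift $f$ to an integer matrix, compute $\coker f$ from $[F\mid D_M]$ and $\ker f$ as the subquotient $\hat f^{-1}(M)/N$). Your proposal is correct, including the observation that the cokernel computation only uses the images $f(g_1),\ldots,f(g_r)$ and the full effectivity of $H$, so there is nothing to add beyond noting that it coincides with the cited proof.
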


Another useful construction is \cite[Lemma~2.4]{cmk} that shows that the class of fully effective abelian groups is closed under extensions. We will not use this result; instead, we will need its generalization to the case of polycyclic groups, namely Proposition~\ref{p:poly_extension}.

\subsection*{Polycyclic groups}

The group $[I\times X, Y]^{(\partial I\times X) \cup (I\times A)}_B$ is not abelian and we will thus need to extend some of the machinery from abelian groups to a wider class of groups, called polycyclic.

\begin{defn}\label{polycykl}
A group $G$ is called \emph{polycyclic}, if it has a subnormal series with cyclic factors. 
In detail, there exists a sequence of subgroups 
\begin{equation}\label{e:subnormal}
G = G_r \geq G_{r-1} \geq \cdots \geq G_1 \geq G_0 = 0
\end{equation}
such that:
\begin{itemize}[topsep=2pt,itemsep=2pt,parsep=2pt,leftmargin=\parindent]
\item
	$G_{i-1}$ is a normal subgroup of $G_{i}$ for $i = 1, \ldots, r$,
\item
	$G_{i} / G_{i-1}$ is a cyclic group for $i = 1, \ldots, r$.
\end{itemize}
\end{defn}

\begin{ex}
Every finitely generated abelian group is polycyclic: when $G\cong\bbZ/q_1\oplus\cdots\oplus\bbZ/q_r$ with the corresponding generators $g_1,\ldots,g_r$, the filtration is given by $G_i=[g_1,\ldots,g_i]$, i.e.\ the subgroup generated by $g_1,\ldots,g_i$.
\end{ex}

Suppose that elements $g_i\in G_i$ have been chosen in such a way that their images in $G_i/G_{i-1}$ are generators of these cyclic groups (clearly, such a choice is possible). Denoting by $q_i$ the order of $G_i/G_{i-1}$, the following map
\begin{align*}
\bbZ/q_1 \times \cdots \times \bbZ/q_r & \lra G \\
(z_1,\ldots,z_r) & \longmapsto z_1 g_1 + \cdots + z_r g_r
\end{align*}
is easily seen to be bijective: given $g\in G$, consider its image $z_r\in G_r/G_{r-1}\cong\bbZ/q_r$. Then $g-z_rg_r\in G_{r-1}$ and we continue in the same manner to show that $g-z_rg_r-\cdots-z_1g_1\in G_0=0$, i.e.\ $g=z_1g_1+\cdots+z_rg_r$ in a unique way. In particular, $G$ is generated by $g_1,\ldots,g_r$. At the same time, the \emph{word problem} in $G$, i.e.\ the problem of deciding whether two given words in the generators $g_i$ are equal, can be translated to $\bbZ/q_1 \times \cdots \times \bbZ/q_r$ and easily solved there. This leads to our notion of a fully effective polycyclic group.

\begin{defn}\label{d:poly}
We say that a semi-effective group $G$, represented by a set $\mcG$, is \emph{fully effective polycyclic} if it is polycyclic with subnormal series \eqref{e:subnormal} and a bijection $\bbZ/q_1 \times \cdots \times \bbZ/q_r \cong G$ as above is computable together with its inverse. In detail, this consists of
\begin{itemize}[topsep=2pt,itemsep=2pt,parsep=2pt,leftmargin=\parindent]
\item
	a finite list of elements $g_1\in G_1,\ldots,g_r\in G_r$ (given by representatives) and the orders $q_1,\ldots,q_r\in\{2,3,\ldots\}\cup\{0\}$ of $G_\thedimm/G_{\thedimm-1}$ (where $q_\thedimm=0$ gives $\bbZ/q_\thedimm=\bbZ$),
\item
	an algorithm that, given $\gamma\in\mcG$, computes integers $z_1,\ldots,z_r$ so that $[\gamma]=z_1 g_1 + \cdots + z_r g_r$; each coefficient $z_i$ is unique within $\bbZ/q_i$.
\end{itemize}
\end{defn}

As explained just prior to the definition, the algorithm in the second point is equivalent to the computability of the projections $p_i\colon G_i\to G_i/G_{i-1}\cong\bbZ/q_i$.

\begin{rem}
In fact, it is even possible to specify (the isomorphism type of) the whole group by a finite amount of data. This includes the conjugation action $g_i+g_j-g_i\in G_{i-1}$ for $i>j$ and the multiples $q_ig_i\in G_{i-1}$.
\end{rem}

\subsection*{Computations with fully effective polycyclic groups}

Next, we show that fully effective polycyclic groups are closed under kernels and extensions.

\begin{prop} \label{p:poly_ker}
Let $G$ be a fully effective polycyclic group, $H$ a fully effective abelian group and $f\colon G \to H$ a computable homomorphism.
Then it is possible to compute $K = \ker f$ as a fully effective polycyclic group.
\end{prop}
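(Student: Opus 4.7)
The plan is to build $K$ by induction along the polycyclic filtration $0 = G_0 \le G_1 \le \cdots \le G_r = G$ of $G$. Setting $K_i := K \cap G_i$, the normality $G_{i-1} \triangleleft G_i$ gives $K_{i-1} \triangleleft K_i$, and the inclusion induces an injection $K_i/K_{i-1} \hookrightarrow G_i/G_{i-1} \cong \bbZ/q_i$. Each factor $K_i/K_{i-1}$ is therefore cyclic, so $K$ is polycyclic with filtration $\{K_i\}$, and after dropping trivial factors we obtain the data required by Definition~\ref{d:poly}.

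For the effective construction, put $H_{i-1} := f(G_{i-1}) \le H$. Because $H$ is abelian and $G_{i-1}$ is generated as a group by $g_1, \ldots, g_{i-1}$, the subgroup $H_{i-1}$ coincides with $\langle f(g_1), \ldots, f(g_{i-1}) \rangle \le H$, irrespective of the non-abelian structure of $G_{i-1}$; by Lemma~\ref{l:ker_coker}, both $H_{i-1}$ and $H/H_{i-1}$ are computable as fully effective abelian groups. An element $z g_i + \gamma$, $\gamma \in G_{i-1}$, lies in $K_i$ iff $z f(g_i) \in H_{i-1}$, so the image of $K_i$ in $G_i/G_{i-1}$ equals $\langle m_i g_i + G_{i-1}\rangle$, where $m_i$ is the order of $f(g_i) + H_{i-1}$ in $H/H_{i-1}$. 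This order is finite whenever $q_i > 0$ (because $q_i f(g_i) = f(q_i g_i) \in H_{i-1}$) and is readily computed from the fully effective presentation of $H/H_{i-1}$; when $q_i = 0$ and the order is infinite, $K_i/K_{i-1} = 0$ and we skip $i$. Given $m_i$, full effectivity supplies integers with $-m_i f(g_i) = \sum_{j<i} z_j f(g_j)$; we define $\gamma_i := \sum_{j<i} z_j g_j$ in any fixed order (the value $f(\gamma_i) = -m_i f(g_i)$ is independent of the order since $H$ is abelian) and set $k_i := m_i g_i + \gamma_i \in K_i$. Collecting the indices $i$ with $K_i \supsetneq K_{i-1}$ and reindexing them as $i_1 < \cdots < i_s$, the elements $k_{i_1}, \ldots, k_{i_s}$ with orders $q_{i_j}/m_{i_j}$ (or $0$ when $q_{i_j} = 0$) form a polycyclic generating system for $K$.

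For the coordinate algorithm, given a representative of $[\gamma] \in K$, first compute the $G$-coordinates $[\gamma] = z_1 g_1 + \cdots + z_r g_r$ via the full effectivity of $G$; since $K \subseteq G_{i_s}$, the coordinates $z_i$ with $i > i_s$ vanish. Proceeding from $j = s$ down to $j = 1$, the coordinate $z_{i_j}$ modulo $q_{i_j}$ must be a multiple of $m_{i_j}$, so we uniquely solve $z_{i_j} \equiv w_j m_{i_j}$ for $w_j$ modulo $q_{i_j}/m_{i_j}$. Replacing $[\gamma]$ by $[\gamma] - w_j k_{i_j}$ gives an element that still lies in $K$, is in $G_{i_j}$, and projects to $z_{i_j} - w_j m_{i_j} = 0$ in $G_{i_j}/G_{i_j-1}$; hence it lies in $K \cap G_{i_j - 1} = K_{i_{j-1}}$, and the recursion proceeds.

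The main subtlety is the non-abelianness of $G$: the bijection $\bbZ/q_1 \times \cdots \times \bbZ/q_r \cong G$ is not a group homomorphism, and the subtraction $[\gamma] - w_j k_{i_j}$ may change coordinates below level $i_j$ unpredictably. This is harmless because at each step we only need the new element to lie in $G_{i_j - 1}$, i.e.\ we only need control over the projection to $G_{i_j}/G_{i_j-1}$; this in turn is made available by the abelianness of $H$, which is precisely what lets us describe $H_{i-1}$ by the finite list $f(g_1), \ldots, f(g_{i-1})$ without reference to any normal form on $G_{i-1}$.
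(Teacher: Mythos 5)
Your proof is correct and follows essentially the same route as the paper's: induct along the filtration of $G$, identify $K_i/K_{i-1}$ inside the cyclic quotient $G_i/G_{i-1}$ via the condition $z f(g_i)\in f(G_{i-1})$ (the paper packages this as $\ker f'$ for $f'\colon G_i/G_{i-1}\to f(G_i)/f(G_{i-1})$ using Lemma~\ref{l:ker_coker}), and lift the resulting generator $m_i g_i$ into $K$ by correcting with an element of $G_{i-1}$ found by solving a linear problem in the abelian group $H$. Your explicit top-down coordinate algorithm is just an unrolled version of the paper's computable projections $K_i\to K_i/K_{i-1}\cong\bbZ/(q_i/m_i)$.
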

\begin{proof}
We will proceed by induction with respect to the length $r$ of the subnormal series for $G$. We denote $K_i=\ker f|_{G_i}=G_i\cap K$. In the following diagram, every row is a short exact sequence and so are the solid columns.
\[
\xymatrix{
&
	0 \ar[d] &
	0 \ar[d] &
	0 \ar@{-->}[d] \\
0 \ar[r] &
	K_{r-1} \ar@{c->}[d] \ar@{c->}[r] &
	K_r \ar@{c->}[d] \ar[r] &
	K_r/K_{r-1} \ar[r] \ar@{c-->}[d] &
	0 \\
0 \ar[r] &
	G_{r-1} \ar@{c->}[r] \ar[d]^-f &
	G_r \ar[r] \ar[d]^-f &
	G_r/G_{r-1} \ar[r] \ar@{-->}[d]^-{f'} &
	0 \\
0 \ar[r] &
	f(G_{r-1}) \ar@{c->}[r] \ar[d] &
	f(G_r) \ar[r] \ar[d] &
	f(G_r)/f(G_{r-1}) \ar[r] \ar@{-->}[d] &
	0 \\
&
	0 &
	0 &
	0
}
\]
It is easy to see that the dashed column is then also exact. By induction, $K_{r-1}$ is fully effective polycyclic. By Lemma~\ref{l:ker_coker}, it is possible to compute $\ker f'\cong K_r/K_{r-1}$; say that it is generated by $t_r\in G_r/G_{r-1}\cong\bbZ/q_r$. This means that $f(t_rg_r)\in f(G_{r-1})$ and thus, from the knowledge of the generators of $G_{r-1}$, it is possible to compute some $h\in G_{r-1}$ with $f(t_rg_r)=f(h)$. Finally, $-h+t_rg_r\in K_r$ is the required element mapping to the generator $t_r\in K_r/K_{r-1}$. The projection $K_r\to K_r/K_{r-1}\cong\bbZ/(q_rt_r^{-1})$ is the composition
\[\xymatrix{
K_r \ar@{c->}[r] & G_r \ar[r] & G_r/G_{r-1}\cong\bbZ/q_r \ar@{-->}[r]^-{t_r^{-1}\times} & \bbZ/(q_rt_r^{-1})
}\]
(the multiplication by $t_r^{-1}$ is defined on the image of $K_r$) and is thus computable.
\end{proof}

The following corollary states that we can further compute kernels of computable maps between fully effective polycyclic groups.
\begin{cor}
Let $G$, $H$ be fully effective polycyclic groups and $f\colon G \to H$ a computable homomorphism. Then it is possible to compute $K = \ker f$ as a fully effective polycyclic group.
\end{cor}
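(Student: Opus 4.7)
The plan is to reduce the statement to Proposition~\ref{p:poly_ker} by inducting on the length $s$ of the subnormal series $0 = H_0 \leq H_1 \leq \cdots \leq H_s = H$ that witnesses $H$ as fully effective polycyclic. The base case $s = 0$ is trivial: then $H = 0$ and hence $K = G$.

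For the inductive step, I would consider the quotient $H/H_{s-1}$, which is cyclic of order $q_s$ and therefore fully effective abelian; the projection $\pi\colon H \to H/H_{s-1}$ is computable by extracting the last coordinate in the decomposition algorithm for $H$. Thus $\pi \circ f\colon G \to H/H_{s-1}$ is a computable homomorphism from a fully effective polycyclic group to a fully effective abelian group, and Proposition~\ref{p:poly_ker} yields
\[
G' = \ker(\pi \circ f) = f^{-1}(H_{s-1})
\]
as a fully effective polycyclic group. The restriction $f|_{G'}\colon G' \to H_{s-1}$ is then a computable homomorphism into a fully effective polycyclic group of subnormal length $s-1$, and applying the inductive hypothesis gives $\ker(f|_{G'}) = \ker f$ as required.

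The main piece of bookkeeping — and the only real obstacle — is to verify that $H_{s-1}$ genuinely inherits the structure of a fully effective polycyclic group, so the induction is well-founded. For this, I would use the truncated subnormal series $0 \leq H_1 \leq \cdots \leq H_{s-1}$ together with the generators $h_1,\ldots,h_{s-1}$ and orders $q_1,\ldots,q_{s-1}$ already provided by the structure on $H$; semi-effectivity as well as the projection and decomposition algorithms restrict from $H$ without issue, since a representative lies in $H_{s-1}$ precisely when its $s$-th decomposition coefficient vanishes. One also has to confirm that representatives of $G'$, as produced by Proposition~\ref{p:poly_ker}, can be fed into $f$ and that the result lies in (the representatives of) $H_{s-1}$; this is immediate since the representatives of the kernel are by construction those representatives of $G$ on which $\pi \circ f$ evaluates to zero.
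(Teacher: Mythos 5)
Your proof is correct and follows essentially the same route as the paper: both peel off the cyclic quotients of $H$ one at a time, computing $f^{-1}(H_{j-1})$ from $f^{-1}(H_j)$ as the kernel of the composition with the projection $H_j\to H_j/H_{j-1}$ via Proposition~\ref{p:poly_ker}. The paper phrases this as an iterative descent $K_s\geq K_{s-1}\geq\cdots\geq K_0$ rather than a recursion on $s$, but the computation is identical; your extra bookkeeping about $H_{s-1}$ inheriting the fully effective polycyclic structure is a reasonable detail the paper leaves implicit.
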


\begin{proof}
Suppose that $H$ has a subnormal series of length $s$. We set $K_j=f^{-1}(H_j)$ and observe that $K=K_0$. We may compute inductively $K_{j-1}$ from $K_j$ using Proposition~\ref{p:poly_ker} as the kernel of the composition $K_j\xlra f H_j\lra H_j/H_{j-1}$ with abelian codomain.
\end{proof}

\begin{rem}
It is also possible to compute cokernels, see \cite{polycgroup}. However, we do not see a way of controlling the running time of such an algorithm.
\end{rem}

\begin{prop}\label{p:poly_extension}
Suppose that there is given a short exact sequence of semi-effective groups
\[
\xymatrix{
0 \ar[r] & K \ar[r]_-f & G \ar[r]_-g  \sect{t} & H \ar[r] \sect{\sigma} & 0
}
\] 
with $K$, $H$ fully effective polycyclic, $f$, $g$ computable homomorphisms, $t\colon\im f\to K$ a computable inverse of $f$ and $\sigma\colon\mcH\to\mcG$ a computable mapping such that $g[\sigma(\eta)]=[\eta]$. Then there is an algorithm that equips $G$ with a structure of a fully effective polycyclic group.
\end{prop}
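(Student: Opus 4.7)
The plan is to build a subnormal series for $G$ by concatenating those of $K$ and $H$. Let $K$ have series $0 = K_0 \leq \cdots \leq K_r = K$ with generators $k_1,\dots,k_r$ of orders $q_1,\dots,q_r$, and let $H$ have series $0 = H_0 \leq \cdots \leq H_s = H$ with generators $h_1,\dots,h_s$ of orders $q_{r+1},\dots,q_{r+s}$. I would set
\[
G_i := f(K_i) \quad (0 \le i \le r), \qquad G_{r+j} := g^{-1}(H_j) \quad (0 \le j \le s),
\]
the two definitions agreeing at $i=r$ because $\im f = \ker g$. The generators of $G$ are taken to be $g_i := f(k_i)$ for $1 \le i \le r$ and $g_{r+j} := \sigma(h_j)$ for $1 \le j \le s$, with the same orders as above.

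To verify this is a polycyclic presentation: for $1 \le i \le r$ the normality and cyclicity descend from $K$ through the injective homomorphism $f$, and $G_i/G_{i-1} \cong K_i/K_{i-1}$ has order $q_i$. For $r < i \le r+s$ the subgroup $G_{i-1}$ is the preimage under $g|_{G_i}$ of the normal subgroup $H_{i-r-1}$ of $H_{i-r}$, hence normal in $G_i$ with $G_i/G_{i-1} \cong H_{i-r}/H_{i-r-1}$ cyclic of order $q_i$. Since $g(\sigma(h_{i-r})) = h_{i-r}$ generates $H_{i-r}/H_{i-r-1}$, the element $g_i$ projects to a generator of the corresponding quotient, as required.

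The decoding algorithm proceeds as follows. Given a representative $\gamma \in \mcG$, first compute $g(\gamma)$ via the computability of $g$ and express it using the fully effective structure on $H$ as $w_1 h_1 + \cdots + w_s h_s$; these unique coefficients are declared to be $z_{r+1},\dots,z_{r+s}$. Next form $u := z_{r+1} g_{r+1} + \cdots + z_{r+s} g_{r+s}$ using the semi-effective operations on $G$; then $g(\gamma - u) = 0$ by choice of the $z_j$, so $\gamma - u \in \im f$. Apply the computable inverse $t$ to obtain an element of $K$ and use the fully effective structure on $K$ to extract the remaining coefficients via $t(\gamma - u) = z_1 k_1 + \cdots + z_r k_r$; the identity $\gamma = z_1 g_1 + \cdots + z_{r+s} g_{r+s}$ then follows from $f\circ t = \id$ on $\im f$.

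The only subtlety is that $G$ need not be abelian, so the additive notation is formal and each partial ``sum'' must be executed in a fixed order using the semi-effective operations of $G$; this bookkeeping is harmless because the construction of the series guarantees that every intermediate element lies in the appropriate $G_i$. Uniqueness of the coefficients in $\bbZ/q_1 \times \cdots \times \bbZ/q_{r+s}$ is inherited from the analogous uniqueness in $K$ and $H$ together with the short exact sequence, so the combined data furnishes a fully effective polycyclic structure on $G$ in the sense of Definition~\ref{d:poly}. I do not expect any genuine obstacle beyond correctly handling the non-abelian operations, since all four ingredients ($g$, the $H$-decoding, $t$, and the $K$-decoding) are available by hypothesis.
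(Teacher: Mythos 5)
Your proposal is correct and follows essentially the same route as the paper's proof: the concatenated filtration $0=f(K_0)\leq\cdots\leq f(K_r)=g^{-1}(H_0)\leq\cdots\leq g^{-1}(H_s)=G$ with generators $f(k_i)$ and $[\sigma(h_j)]$, and coefficient extraction via $g$ together with the fully effective structure on $H$ for the top part and via $t$ together with the fully effective structure on $K$ for the bottom part. Your write-up merely spells out in one pass what the paper phrases as computability of the level-wise projections $G_i\to G_i/G_{i-1}$, which the paper has already noted is an equivalent formulation.
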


\begin{proof}
We have the following filtration
\[
G = g^{-1}(H_s)\geq g^{-1}(H_{s-1}) \geq \cdots \geq g^{-1}(H_0) = f(K_r)\geq f(K_{r-1}) \geq \cdots \geq f(K_0) = 0
\]
with filtration quotients either $K_i/K_{i-1}$ or $H_j/H_{j-1}$, the corresponding projections
\begin{align*}
& f(K_i)\xlra t K_i\lra K_i/K_{i-1}, \\
& g^{-1}(H_j)\xlra g H_j\lra H_j/H_{j-1}
\end{align*}
and generators given either by $f(k_i)$ when $k_i\in K_i$ is the generator or by $[\sigma(\eta_j)]$ when $\eta_j$ represents the generator $h_j\in H_j$.
\end{proof}

\section{Maps out of suspensions II}

\subsection*{Notation}

From now on, for $q\geq 1$, we denote $\Gp{\then}{\theq}=[I^\theq\times X,\Pnew]^{(\partial I^\theq\times X)\cup(I^\theq\times A)}_B$.

\subsection*{Proof of Theorem~\ref{thm:main_ptb}}

We are now ready to finish the proof of Theorem~\ref{thm:main_ptb}. We will formalize its statement in the following claim:
\begin{enumerate}[topsep=2pt,itemsep=2pt,parsep=2pt,labelindent=0.5em,leftmargin=*,label=\textbf{(poly)${}_n$}]
\renewcommand{\theenumi}{\textbf{(poly)}}
\renewcommand{\labelenumi}{\textbf{(poly)${}_\then$}}
\item\label{cl:poly}
	It is possible to equip $\Gp{\then}{\theq}$ with a structure of a fully effective polycyclic group.
\end{enumerate}
Since $\Gp{\then}{\theq}$ for $(X,A)$ is a $\Gp{\then}{1}$ for $(I^{\theq-1},(\partial I^{\theq-1}\times X)\cup(I^{\theq-1}\times A))$, it would be enough to restrict to the case $\theq=1$. This special case also implies Theorem~\ref{thm:main_ptb}:

\begin{proof}[Proof of Theorem~\ref{thm:main_ptb} from \clpoly{\then}]
Let $\then=\dim(I\times X)=1+\dim X$. Then
\[[\Sigma_BX,Y]^{\Sigma_BA}_B\cong[I\times X,Y]^{(\partial I\times X)\cup(I\times A)}_B\cong[I\times X,\Pnew]^{(\partial I\times X)\cup(I\times A)}_B=\Gp{\then}{1}\]
and the last term is computable by \clpoly{\then}.
\end{proof}

The remainder of this section is devoted to the proof of \clpoly{\then}. We observe that
\[[I^\theq\times X,\Ln]^{(\partial I^\theq\times X)\cup(I^\theq\times A)}_B\cong H^{\then}(I^\theq\times X,(\partial I^\theq\times X)\cup(I^\theq\times A);\pi_\then)\cong H^{\then-\theq}(X,A;\pi_\then)\]
and similarly $[I^\theq\times X,\Kn]^{(\partial I^\theq\times X)\cup(I^\theq\times A)}_B\cong H^{\then+1-\theq}(X,A;\pi_\then)$.

\begin{proof}[Proof of \clpoly{\then-1}${}+{}$\clnull{\then-1}${}\Ra{}$\clpoly{\then}]
The computation in \cite{aslep} was based on fully effective abelian groups. With the notion of a fully effective polycyclic group at hand, we may proceed in the same way. Namely, the group $\Gp{\then}{\theq}$ is semi-effective by Proposition~\ref{p:semi_effective}. The exact sequence \eqref{e:les} applied to $(I^\theq\times X,(\partial I^\theq\times X)\cup(I^\theq\times A))$ instead of $(X,A)$ reads
\[\Gp{\then-1}{\theq+1}\xlra{\partial}H^{\then-\theq}(X,A;\pi_\then)\xlra{j_*}\Gp{\then}{\theq}\xlra{p_{\then*}}\Gp{\then-1}{\theq}\xlra{\knst}H^{\then+1-\theq}(X,A;\pi_\then)\]
and induces a short exact sequence
\[\xymatrix{
0 \ar[r] & \coker\partial \ar[r]_-{\jnst} & \Gp{\then}{\theq} \ar[r]_-{p_{n*}} \sect{t} & \ker k_{\then*} \ar[r] \sect{\sigma} & 0
}\]
with the first term fully effective abelian by Lemma~\ref{l:ker_coker} and the last term fully effective polycyclic by Proposition~\ref{p:poly_ker}; both claims use \clpoly{\then-1}.

For the application of Proposition~\ref{p:poly_extension}, we need to provide algorithms for the two indicated sections. The section $\sigma$ is defined on the level of representatives (on which it depends) by mapping a partial diagonal $f\colon I^\theq\times X\ra\Pold$ to an arbitrary lift $\widetilde{f} \colon I^\theq\times X\ra\Pnew$ of $f$ that is zero on $(\partial I^\theq \times X) \cup (I^\theq \times A)$. The computation of $\widetilde{f}$ is taken care of by Proposition~\ref{prop:lift_ext_one_stage}.

For the construction of the partial inverse $t$ on $\im\jnst=\ker\pnst$, let $f\colon I^\theq\times X\ra\Pnew$ be a diagonal such that its composition with $\pn\colon\Pnew\ra\Pold$ is homotopic to zero. Then we can compute such a nullhomotopy $h$ by \clnull{n-1}. Using Proposition~\ref{prop:homotopy_lifting}, we lift it along $\pn$ to a homotopy from some $f'$ to $f$. Since $\pn f'=\oold$, the image of $f'$ lies in $\Ln$ and we may set $t([f])=[f']$.
\end{proof}

\begin{rem}
It is possible to organize the computation of $\Gp{\then}{1}$ in Theorem~\ref{thm:main_ptb} in such a way that the algorithm only accesses a fully effective polycyclic structure on $\Gp{\them}{1}$, $\them\leq\then$, and generators of $\Gp{\them}{2}$, $\them<\then$. This is because one may easily compute generators of $\Gp{\then}{\theq}$ from those of $\ker\knst$ and $\coker\partial$. Now, $\coker\partial$ is generated by the images of generators of $H^{\then-\theq}(X,A;\pi_\then)$ and for $q\geq 2$, it is not too difficult to compute a set of generators of $\ker\knst$ from a set of generators of the \emph{abelian} group $\Gp{\then-1}{\theq}$.%
\footnote{
	Pretend that the domain is free abelian on the provided generators and compute the generators of the kernel in this situation -- they generate the kernel even when the domain is not free abelian. The same procedure for non-abelian groups may easily lead to infinitely generated groups.
}
Thus, for $\theq\geq 2$, the computation of generators of $\Gp{\then}{\theq}$ can be executed by induction on $\then$ while $\theq$ is kept fixed.
\end{rem}

\section{Polynomiality}

\subsection*{Proof of the polynomiality claim of Theorem~\ref{thm:main}}
The definitions and most of the ingredients are contained in \cite[Section~8]{aslep}. First, we discuss the algorithmic aspects of polycyclic groups, i.e.\ Propositions~\ref{p:poly_ker} and~\ref{p:poly_extension}. For this purpose, we introduce the notion of a \emph{family of fully effective polycyclic groups}. This is a collection of polycyclic groups $(G(p))_{p\in\sfP}$ represented on sets $(\mcG(p))_{p\in\sfP}$ together with the following algorithms:
\begin{itemize}[topsep=2pt,itemsep=2pt,parsep=2pt,leftmargin=\parindent]
\item
	input: $p\in\sfP$, $\gamma\comma\delta\in\mcG(p)$; output: representatives of $0\comma[\gamma]+[\delta]\comma{-[\gamma]}\in G(p)$;
\item
	input: $p\in\sfP$; output: the list $\gamma_1,\ldots,\gamma_r$ of representatives of the generators $g_1,\ldots,g_r$ of $G(p)$ and the list $q_1,\ldots,q_r$ of the orders of $G_i/G_{i-1}$, as in Definition~\ref{d:poly};
\item
  input: $p\in\sfP$, $\gamma\in\mcG(p)$; output: the list of coefficients $z_1,\ldots,z_r$ such that $[\gamma]=z_1g_1+\cdots+z_rg_r$ holds in $G(p)$, each $z_i$ unique within $\bbZ/q_i$.
\end{itemize}
A \emph{polynomial-time family} is one for which the running times of all these algorithms are bounded by a polynomial in the size of the input. Similarly, a (polynomial-time) family of semi-effective groups consists only of the algorithms in the first point.

We view Proposition~\ref{p:poly_ker} as a construction (i.e.\ a mapping, but in the situation where both sides consist of computational structures, see \cite{aslep})
\[\xymatrix{
\left\{\parbox{\widthof{computable group homomorphisms}}{computable group homomorphisms $f\colon G\to H$ with $G$ fully effective polycyclic, $H$ fully effective abelian}\right\} \ar[r] & \left\{\parbox{\widthof{fully effective polycyclic groups}}{fully effective polycyclic groups}\right\},
}\]
sending $f\colon G\to H$ to $\ker f$, and claim that it is polynomial-time. Since the computation of the generators $g_1,\ldots,g_r$ takes polynomial time, $r$ is bounded by this polynomial. Thus, the inductive computation of the generators of $\ker f$ takes a polynomial number of steps. Each step is performed in time that is bounded by a fixed polynomial. Thus, the total running time is also polynomial. The same holds for the projections $K_i\to K_i/K_{i-1}\cong\bbZ/(q_it_i^{-1})$.

Proposition~\ref{p:poly_extension} is seen to be a polynomial-time construction defined on short exact sequences of semi-effective groups equipped with the indicated set-theoretic sections and with outer terms fully effective polycyclic; it takes values in fully effective polycyclic groups.

We will now formalize the algorithms of \clnull{\then} and \clpoly{\then}. First, it is possible to compute from $\varphi\colon Y\to B$ in polynomial time the parameters of the Moore--Postnikov tower $P_n$, giving a polynomial-time mapping $\Map \ra \MPS_n$; details on the parameter set $\MPS_n$ (Moore--Postnikov system) as well as $\PMPS_n$ (pointed Moore--Postnikov system) can be found in \cite{aslep}. We write $(S,o)\in\PMPS_n$ with $S\in\MPS_n$ and $o$ denoting the zero section. Proposition~\ref{p:semi_effective} provides a polynomial-time family
\[\xymatrix@R=1pc{
\rightbox{\Gpse{n}{\theq}\colon\PairPMPS_n}{} \ar@{~>}[r] & \leftbox{}{\{\textrm{semi-effective groups}\}} \\
\rightbox{(X,A,\beta,S,o)}{} \ar@{|->}[r] & \leftbox{}{[I^\theq\times X,\Pnew]^{(\partial I^\theq\times X)\cup(I^\theq\times A)}_B=\Gp{\then}{\theq}}
}\]
defined on pairs $(X,A)$ over $B$ together with $(S,o)$ parametrizing a pointed Moore--Postnikov system over $B$. The nullhomotopy algorithm is a polynomial-time construction
\[\xymatrix{
\left\{\parbox
	{\widthof{$(P_\then,o)$ a pointed Moore--Postnikov system over $B$,}}
	{$(P_\then,o)$ a pointed Moore--Postnikov system over $B$, $f\colon I^\theq\times X\to\Pnew$ over $B$ that is zero on $(\partial I^\theq\times X)\cup(I^\theq\times A)$, generators of each $\Gp{1}{\theq+1},\ldots,\Gp{\then-1}{\theq+1}$}
\right\} \ar[r] &
\left\{\parbox
	{\widthof{$h\colon I^{\theq+1}\times X\to\Pnew$}}
	{$h\colon I^{\theq+1}\times X\to\Pnew$}
\right\}\cup\{\bot\};
}\]
either it gives $\bot$ if $f$ is not nullhomotopic or it computes a nullhomotopy of $f$. Assuming that $\Gpse{\then-1}{\theq}$ has been lifted to a polynomial-time family $\Gpfe{\then-1}{\theq}$ of fully effective polycyclic groups, Propositions~\ref{p:poly_ker} and~\ref{p:poly_extension} then lift $\Gpse{\then}{\theq}$ to a polynomial-time family of fully effective polycyclic groups
\[\xymatrix{
\Gpfe{\then}{\theq}\colon\PairPMPS_\then\times\Gen{1}{\theq+1}\times\cdots\times\Gen{\then-1}{\theq+1} \ar@{~>}[r] & \{\textrm{fully effective polycyclic groups}\},
}\]
where an element of $\Gen{\them}{\theq}$ is a list of generators of $\Gp{\them}{\theq}$; of course, there are some compatibility constraints between $\PairPMPS_\then$ and the $\Gen{\them}{\theq}$. The parameters in $\Gen{\then}{\theq}$ are computed recursively using the fully effective $\Gpfe{\then}{\theq}$ or, for $\theq\geq 2$, using the parameters from $\Gen{\them}{\theq}$, $\them<\then$, see the remark at the end of the previous section.
\qed

\subsection*{Acknowledgement}

We are grateful to Martin \v{C}adek for carefully reading the paper and for his useful comments and suggestions.

\end{document}